\documentclass[11pt]{amsart}

\usepackage{graphicx}
\usepackage[labelsep=space]{caption}
\usepackage{amsfonts}
\usepackage{tikz-cd}
\usepackage{amsthm}
\usepackage{nccmath}
\usepackage{etex}
\usepackage{amssymb,mathtools}
\usepackage{mathrsfs}
\usepackage{dsfont}
\usepackage[all]{xy}
\usepackage{microtype}

\usepackage[utf8]{inputenc}
\usepackage[T1]{fontenc}
\usepackage{lmodern} 
\usepackage{latexsym}
\usepackage{MnSymbol}
\usepackage{nicefrac}
\usepackage{microtype}
\usepackage{color}
\usepackage{tikz-cd}
\usepackage{blindtext}
\usepackage{enumitem}
\usepackage{old-arrows}
\usepackage{empheq}
\usepackage{extpfeil}
\usepackage{hyperref}
\usepackage{theoremref}
\usepackage{float}
\usepackage{caption}
\usepackage{subcaption}

\makeatletter
\renewenvironment{proof}[1][\proofname]{%
	\par\pushQED{\qed}\normalfont%
	\topsep6\p@\@plus6\p@\relax
	\trivlist\item[\hskip\labelsep\bfseries#1\@addpunct{.}]%
	\ignorespaces
}{%
	\popQED\endtrivlist\@endpefalse
}
\makeatother

\usepackage{geometry}
\usepackage{secdot}
\usetikzlibrary{decorations.markings}
\tikzset{double line with arrow/.style args={#1,#2}{decorate,decoration={markings,%
			mark=at position 0 with {\coordinate (ta-base-1) at (0,1pt);
				\coordinate (ta-base-2) at (0,-1pt);},
			mark=at position 1 with {\d[#1] (ta-base-1) -- (0,1pt);
				\d[#2] (ta-base-2) -- (0,-1pt);
}}}}
\usepackage{ifxetex}
\usepackage{etoolbox}
\ifxetex
\usepackage{unicode-math}
\makeatletter
\patchcmd{\arrowfill@}{-7mu}{-14mu}{}{}
\patchcmd{\arrowfill@}{-7mu}{-14mu}{}{}
\patchcmd{\arrowfill@}{-2mu}{-4mu}{}{}
\patchcmd{\arrowfill@}{-2mu}{-4mu}{}{}
\makeatother
\fi

\theoremstyle{plain}
\newtheorem{theorem}{Theorem}[section]
\newtheorem{lemma}[theorem]{Lemma}

\theoremstyle{remark}
\newtheorem{remark}[theorem]{Remark}
\theoremstyle{definition}
\newtheorem{definition}[theorem]{Definition}

\newcommand{\Homeo}{\mathrm{Homeo}}
\newcommand{\PHomeo}{\mathrm{PHomeo}}
\newcommand{\MCG}{\mathrm{MCG}}
\newcommand{\PMCG}{\mathrm{PMCG}}
\newcommand{\LB}{\mathrm{LB}}
\newcommand{\PLB}{\mathrm{PLB}}
\newcommand{\Aut}{\mathrm{Aut}}

\usetikzlibrary{decorations.markings}
\tikzset{double line with arrow/.style args={#1,#2}{decorate,decoration={markings,%
			mark=at position 0 with {\coordinate (ta-base-1) at (0,1pt);
				\coordinate (ta-base-2) at (0,-1pt);},
			mark=at position 1 with {\draw[#1] (ta-base-1) -- (0,1pt);
				\draw[#2] (ta-base-2) -- (0,-1pt);
}}}}

\newtheorem{theoremx}{Theorem}

\begin{document}
	
\title{The generalized Lefschetz number and loop braid groups}
\author{Stavroula Makri}
\address{Department of Mathematics, Vrije Universiteit Amsterdam, Faculty of Science, De Boelelaan 1111, 1081 HV Amsterdam, the Netherlands}
\email{s.makri@vu.nl}
	
	\begin{abstract}
We study the interplay between braid group theory and topological dynamics in three dimensions. 
While classical braid theory has been extensively applied to surface homeomorphisms to analyze fixed and periodic points, an analogous framework in three-dimensional manifolds has been lacking. 

In this work, we introduce the use of loop braid groups as a three-dimensional generalization of classical braid groups to investigate homeomorphisms of the 3-ball that leave invariant a finite collection of circles. In our main theorem we associate the Burau matrix representations of loop braid elements to the generalized Lefschetz number. This result provides important information on the existence and interaction of fixed and periodic points of such homeomorphisms. In addition, an application of our theorem gives an estimate of the number of their periodic points. 
Our theorem establishes a three-dimensional analogue of a classical result, providing the first framework that connects loop braid groups with Nielsen fixed point theory and topological dynamics in dimension three, providing a rich 3-dimensional framework, whose topological and algebraic aspects have been extensively investigated, for studying its topological dynamical properties.
        \\\\
		\noindent \textit{Keywords:} Generalized Lefschetz number, Nielsen theory, Loop braid groups, Burau representation, 3-manifolds, Fixed points, Periodic orbits
	\end{abstract}
	\maketitle

	\section{Introduction}

The connection between braid group theory and topological dynamics arises from the remarkable
applications of braid group theory to the study of the periodic orbit structure of iterated surface homeomorphisms.
In dynamical systems, topological invariants are often used to study the qualitative and quantitative properties of
the system. Elements of braid groups are such invariants, characterizing the topological behavior of periodic orbits
in the case of surface homeomorphisms isotopic to the identity. Let $S$ be a compact surface and $f \colon S \to S$ a homeomorphism isotopic to the identity map.
Let $P$ be a finite invariant set of $f$, that is, a union of finitely many periodic orbits of $f$.
Choose and fix an isotopy $\{f_t\}_{0 \le t \le 1}$ that deforms $\mathrm{id}$ to $f$.
The image of $P$ under $f_t$ defines a braid on $S$. The use of braids in surface dynamics began in the 1980s and has since grown
into a significant area within the theory of low-dimensional dynamical systems. For a detailed overview of topological dynamics on surfaces and braid theory, we refer the reader to \cite{Boyland1994Topological, matsuoka}. 

One of the main interactions between braid group theory and surface dynamics is the use of matrix representations of braid elements in order to obtain information about the existence of fixed points of $f$, which is associated to the given braid element. More precisely, the trace of a matrix representation of a braid on $S=\mathbb{D}^2$, that is associated to its Burau matrix, is closely related to the abelianized generalized Lefschetz number of $f$ defined on $S-P$, as established by Fadell, Husseini, and Fried in the early 1980’s. The generalized Lefschetz number describes how periodic orbits not belonging to $P$ are linked with it. Thus, by computing the matrix representation one can obtain information about the existence and linking behavior of fixed points and periodic orbits, see Theorem 3.3 in \cite{matsuoka} for the case of the punctured $2$-disk. Similar results have been obtained for the torus by  Huang and Jiang in \cite{huangJiang1989} and for the annulus by Jiang \cite{Jiang1992Periodic}. The main objective of this work is to establish a three-dimensional analogue of this result using loop braid groups.

Knowing that braid theory is a very powerful and important tool in studying important properties of a dynamical system, \cite{LlibreMacKay1990, FranksMisiurewicz1993, Boyland1994Topological, Guaschi1994PseudoAnosov,  Los1997Forcing, deCarvalhoHall2004}, a natural question that arises is what happens in the case of $3$-dimensional manifolds.
Can we use such techniques, namely the use of braid theory, in order to study fixed points and periodic
orbits on $3$-dimensional manifolds? While a considerable amount of research has been carried out on studying the complexity
of orbit structures in dynamical systems on $3$-dimensional manifolds \cite{Epstein1972, Rechtman2010, CristofaroGardiner2023},
there are no known results about dynamics on $3$-dimensional manifolds that are analogous
to the known results about the orbit structure on surfaces that use braid theory. One of
the main reasons for this is that the classical braid group is trivial on $3$-dimensional
manifolds. Given that classical braid group theory is trivial on three-dimensional manifolds,
the existing connection between braid theory and topological dynamics does not
extend beyond two-dimensional manifolds directly.

In this article we propose the use of the theory of loop braid groups, a three-dimensional analogue
of classical braid groups, in order to expand the existing body of literature to a
three-dimensional setting. The idea of studying fixed and periodic points of homeomorphisms of the $3$-ball using loop
braid groups, which presents the generalization in $3$ dimensions of the study of fixed points and
periodic orbits of a surface homeomorphism using braid theory, is a new approach. 

The loop braid groups $\LB_n$ appear
in the literature under many different names: conjugating automorphisms of the free
group $F_n$, due to Savushkina \cite{savushkina1996group}, loop braid groups, due to Baez--Crans--Wise \cite{baez2007exotic},
and groups of untwisted rings, due to Brendle--Hatcher \cite{brendle2013configuration}. The group $\LB_n$
is in fact a $3$-dimensional analogue of the Artin braid group $B_n$, where instead of
considering points on a surface we consider circles in the $3$-ball. There are several
interpretations of $\LB_n$, which will be presented in Section \ref{loop}: as the fundamental group of the space of specific configuration, as automorphisms of the free group $F_n$ and in terms of mapping class groups.
We consider $f \colon B^3 \to B^3$ to be an orientation-preserving homeomorphism
isotopic to the identity, and $C_n$ a finite invariant set of $f$, that is, a union of finitely
many circles lying in the interior of $B^3$. Choose and fix an isotopy $\{f_t\}_{0 \le t \le 1}$ that deforms $\mathrm{id}$
to $f$. The image of $C_n$ under $f_t$ defines a loop braid.

In this work, we develop a three-dimensional counterpart of the classical correspondence between braid group representations and generalized Lefschetz numbers, relating loop braid group representations to the abelianization of generalized Lefschetz numbers for homeomorphisms of the $3$-ball. The theorem that puts together braid group representations and generalized Lefschetz numbers was presented initially by Fadell and Husseini \cite{fadell1983}
as an application of Nielsen fixed point theory, then appeared as theorem
explicitly by Matsuoka in \cite{Ma1}, and given with a detailed proof by Huang and Jiang \cite{huangJiang1989}. In our setting, we associate the generalized Lefschetz number of $f$ with the trace of a matrix associated to the corresponding loop braid, that is linked to its Burau representation, which will be introduced in Section \ref{loop}. To be more precise, we prove that given an element $b$ of the loop braid group $\LB_n$, that corresponds to a homeomorpism $f\in \operatorname{Homeo}(B^3, C_n)$, where $C_n$ is a trivial link of $n$ components in the interior of $B^3$, the trace of the matrix associated to the Burau representation of $b$, which we will denote by $S(b)$, is strongly related to the abelianized generalized Lefschetz number of $f$ of $B^3-C_n$. Thus, by calculating the trace of a matrix we obtain information about the existence of fixed points of $f$ and moreover how they interact with $C_n$ under the isotopy $f_t$. The theorem below summarizes our main result; the full formal statement appears in Section \ref{Main} as Theorem \ref{thmmain}. Let $f\in \operatorname{Homeo}(B^3, C_n)$, we denote by $b_{f,C_n}$ its corresponding braid and by $\operatorname{ind}$ the fixed point index of a Nielsen class.

\begin{theoremx}
Let $b = b_{f,C_n}$. Then
\[
1+\operatorname{tr}S(b)^{\pi_\mu} = \sum_{I \in \mathbb{Z}^m} \operatorname{ind}\big(\operatorname{Fix}_I(\bar{f})\big) \, t_1^{i_1} \cdots t_m^{i_m} \in \mathbb{Z}[t_1^{\pm 1}, \dots, t_m^{\pm 1}],
\]
where $S(b)=\bar{R}(b)-R(b)$.
\end{theoremx}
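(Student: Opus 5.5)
Throughout, $\bar f$ denotes the restriction of $f$ (after the standard modification near $C_n$ and $\partial B^3$) to the complement $X=B^3-C_n$. The plan is to follow the Huang--Jiang proof of the classical disk statement: compute the abelianized generalized Lefschetz number of $\bar f$ from chain-level traces on a cellular model of $X$. Concretely, $X$ deformation retracts onto a finite $2$-complex $K$ with $\pi_1(K)=F_n$. A convenient choice is a wedge of $n$ circles (one meridian $x_i$ per component of $C_n$) together with $n$ spheres (one per boundary torus, or rather the $2$-spheres separating each circle). Thus $H_1(X)=\mathbb{Z}^n$ and $H_2(X)=\mathbb{Z}^n$. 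The loop braid $b$ acts on $\pi_1$ by the conjugating automorphism $\phi_b$ of $F_n$, and $\bar f$ is homotopic to a cellular map $g\colon K\to K$ realizing $\phi_b$ on $1$-cells and permuting and orienting the $2$-spheres.

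The Reidemeister trace formula then gives
\[
L(\bar f)=\sum_q (-1)^q \operatorname{tr}\bigl(\tilde\partial g_q\bigr)\in \mathbb{Z}[\pi_1/\text{twisted conj.}].
\]
Projecting to the abelian quotient $\pi_\mu\colon F_n\to \mathbb{Z}^m$ (the quotient identifying meridians of circles in the same $b$-orbit, so that twisted conjugacy classes become monomials $t^I$), we obtain $\sum_I \operatorname{ind}(\operatorname{Fix}_I(\bar f))t^I$, using the standard fact that the coefficients of the Reidemeister trace are the indices of Nielsen classes.

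The three chain-level contributions are as follows.
\begin{enumerate}
\item The single $0$-cell contributes $1$.
\item The $1$-cells contribute $-\operatorname{tr}$ of the Fox Jacobian $\bigl(\partial \phi_b(x_i)/\partial x_j\bigr)^{\pi_\mu}$, which is the (unreduced) Burau matrix $R(b)^{\pi_\mu}$ by the definition recalled in Section~\ref{loop}.
\item The $2$-cells contribute $+\operatorname{tr}$ of the lifted action on the spheres. Since $f$ carries the sphere around the $i$-th circle to the sphere around its image circle, translated by the conjugating element, this matrix is a permutation-type matrix with monomial entries, which is $\bar R(b)^{\pi_\mu}$.
\end{enumerate}
Summing gives $1+\operatorname{tr}(\bar R(b)-R(b))^{\pi_\mu}$.

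Before the trace formula can be applied, two reductions are needed. The first is to verify that the homotopy class of $\bar f$ (rel the isotopy, which fixes the basepoint lift) depends only on $b$, and that the lift determined by the isotopy $f_t$ corresponds to the identity-based lift. This identification is where the isotopy enters, exactly as in the surface case. The second is to check that $\pi_\mu$ kills twisted conjugacy ambiguity: in the abelianization modulo the relation $t_{\mu(i)}=t_{\mu(b(i))}$, the images of $\phi_b(w)$ and $w$ coincide, so twisted classes map to well-defined monomials.

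The main obstacle is the $2$-dimensional part. One must choose the cell structure and the homotopy to a cellular map carefully, so that the lifted $2$-chain map is exactly $\bar R(b)$ (including the signs coming from the flip generators $\rho_i$, which reverse the orientation of a circle and hence of its sphere) and has no extra Fox-derivative terms. I would first verify this on the generators $\sigma_i,\tau_i,\rho_i$ of $\LB_n$ using the explicit automorphisms. I would then use functoriality, since both sides are crossed-homomorphic in $b$ and the trace of the cellular chain map is computed from the composite, to pass to arbitrary $b$.
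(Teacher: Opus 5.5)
Your route is the paper's: abelianize the Reidemeister trace via $\pi_\mu$, apply Husseini's trace formula on $\bigvee^n S^1\vee\bigvee^n S^2$, compute the generator matrices, and extend by the twisted rule $R(bb')=R(b)^{\nu(b')}R(b')$. Your $0$- and $1$-cell contributions are correct; the Fox Jacobian of $\sigma_i$ is exactly the paper's $A_{1,\sigma_i}$. The gap is step (3), which is where the genuinely three-dimensional input lies.

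\textbf{The lifted action on $2$-cells is not monomial.} Neither is $\bar R$: $\bar R_{\sigma_i}$ has the block $\begin{pmatrix}0&a_i\\ 1&1-a_i\end{pmatrix}$, so step (3) equates a monomial matrix with a matrix of a different shape.

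Geometrically, the sphere around the passing circle $i$ is carried rigidly, giving $\tilde b_i\mapsto x_i\tilde b_{i+1}$. But circle $i$ pierces the spanning disk of circle $i+1$. So the sphere around circle $i+1$ is dragged into a sock enveloping the passing circle, and it goes to $\tilde b_i+\tilde b_{i+1}-x_i\tilde b_{i+1}$, not to $\tilde b_i$. This happens even though $x_{i+1}\mapsto x_i$ has trivial conjugating element, so the $\pi_1$-data do not determine the $\pi_2$-action the way you describe.

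A quick check confirms this. Put the base point on $\partial B^3$, which $f$ fixes pointwise. Then the chosen lift is the identity on the lift of $\partial B^3$ through it, and that lift represents $\sum_j\tilde b_j$. Hence, in the convention $\tilde f_2(\tilde b_i)=\sum_j a_{ij}\tilde b_j$, every column of the $2$-cell matrix must sum to $1$ in $\mathbb{Z}F_n$. $A_{2,\sigma_i}$ satisfies this; your block $\begin{pmatrix}0&x_i\\ 1&0\end{pmatrix}$ does not.

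No careful choice of cellular approximation can repair this. In this model $\partial_2=0$ and there are no $3$-cells, so $C_2(\tilde K)=H_2(\tilde K)\cong\pi_2(K)$, and $\tilde g_2$ is forced to be the twisted action of $f$ on $\pi_2$.

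The discrepancy matters. Take $b=\sigma_1^2\in\LB_2$; it is pure, so $\pi_\mu(a_j)=t_j$. One has $\operatorname{tr}R(b)=\operatorname{tr}\bar R(b)=1+a_1a_2$, so $1+\operatorname{tr}S(b)^{\pi_\mu}=1$. Your monomial matrices instead give $1+(t_1+t_2)-(1+t_1t_2)=t_1+t_2-t_1t_2$. Both specialize to the ordinary Lefschetz number $1$ at $t_j=1$, so the error is invisible exactly where the theorem has content. What is missing is the actual computation of the $\pi_2$-action of $\sigma_i$, i.e. the paper's $A_{2,\sigma_i}$.

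Two smaller points:
\begin{enumerate}
\item Here $\LB_n=\MCG(B^3,C_n)$ preserves the orientation of each circle. $\rho_i$ passes circle $i$ around circle $i+1$; it acts by $x_i\leftrightarrow x_{i+1}$ and by a plain permutation matrix on $2$-cells. It is not a flip, and there are no generators $\tau_i$. Inserting orientation signs would produce a wrong $\bar R(\rho_i)$.
\item The $\bar f$ in the statement is the blow-up on the compact $\bar B$, which may have extra fixed points on the boundary tori. The trace formula should be applied there and then transported to the $2$-complex by homotopy equivalence. It should not be applied to a restriction of $f$ to the open complement, whose fixed set need not be compact.
\end{enumerate}
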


In the theorem, the right hand side of the equality corresponds to the abelianization of the generalized Lefschetz number of $\bar{f}$. In our setting, we consider homeomorphisms $f$ of $B^3$ that leave invariant in its interior a trivial link of $n$ components. Equivalently, we may consider homeomorphisms of the space $B=B^3\setminus C_n$, where $C_n:=S^1_1 \sqcup\dots\sqcup S^1_n$ and since we are interested in compact spaces, we define its compactification $\bar{B}$ and its corresponding homeomorphism $\bar{f}:\bar{B}\to \bar{B}$, as explained in detail in Subsection \ref{compact}.  Moreover, each monomial $t_1^{i_1} \cdots t_m^{i_m}$ corresponds to a fixed point $x$ and each exponent corresponds to the linking number of the fixed point $x$ and a subset of $C_n$, as described in Subsection \ref{abelian}

This theorem carries importance on several fronts. It extends a significant theorem of a 2-dimensional setting to a 3-dimensional one. In addition, it provides an interesting three-dimensional setting, that is the $3$-ball with orientation-preserving homeomorphisms that leave invariant a set of $n$ circles in its interior, whose algebraic and topological aspects have already been extensively studied \cite{goldsmith1981theory, fenn1997braid, Vershinin2001, brendle2013configuration, damiani2017journey, Bellingeri2020, representationLoop, DarnePalmerSoulie2025}, offering a rich framework to investigate its topological dynamical properties, including the analysis of fixed and periodic points. Last but not least, it presents a beautiful application of the Burau representation of the loop braid groups to the study of fixed and periodic points of a 3-dimensional dynamical system.

In the theorem below we present an application of Theorem \ref{thmmain}, which provides an estimate for the cardinality of periodic points of $f\in \operatorname{Homeo}(B^3, C_n)$. The full formal statement appears in Section \ref{Application} as Theorem \ref{thmapp}.
Let $\operatorname{Per}_{ p, C_n}(f)$ be the set of periodic points of $f$ of minimal period $p$ that do not belong to $S^1_i\in C_n$, for $1\leq i\leq n$. 

    \begin{theoremx}
Let $p\in \mathbb{N}^*$, the following holds:
\[
|\operatorname{Per}_{p, C_n}(f)| \ge p (M_{p, C_n}-n_p),
\]
where $M_{p, C_n}$ is the number of monomials $t_1^{i_1} \cdots t_m^{i_m}$, with non-zero coefficient, appearing in $1-\operatorname{tr}(R(b_{f,C_n})^{\pi_\mu})^p+\operatorname{tr}(\bar{R}(b_{f,C_n})^{\pi_\mu})^p$ with $\gcd(p, i_1, \dots, i_m) = 1$ and $n_p$ denotes the number of indices $1\leq i \leq n $, such that $S^1_i$ has minimal period that divides $p$.
\end{theoremx}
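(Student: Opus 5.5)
We apply the main theorem (Theorem \ref{thmmain}) to the iterate $f^p$ and then count Nielsen classes. The first point is that $f^p$ also lies in $\operatorname{Homeo}(B^3, C_n)$, and its loop braid is the $p$-th power of that of $f$: $b_{f^p,C_n} = b_{f,C_n}^p$, obtained by concatenating the isotopy $\{f_t\}$ with its translates. Since $R$ and $\bar R$ are homomorphisms and the abelianization $\pi_\mu$ commutes with them, we get
\[
\operatorname{tr} S(b^p)^{\pi_\mu} = \operatorname{tr}\big(\bar R(b)^{\pi_\mu}\big)^p - \operatorname{tr}\big(R(b)^{\pi_\mu}\big)^p .
\]
Here the sign convention must be matched to the polynomial in the statement; I will track this against the definition $S=\bar R - R$. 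Theorem \ref{thmmain} applied to $\bar f^p$ then identifies $1-\operatorname{tr}(R(b)^{\pi_\mu})^p+\operatorname{tr}(\bar R(b)^{\pi_\mu})^p$ with $\sum_I \operatorname{ind}(\operatorname{Fix}_I(\bar f^p))\,t^I$. Consequently every monomial with nonzero coefficient corresponds to an abelianized Nielsen class of $\bar f^p$ with nonzero index, and each such class contains at least one fixed point of $\bar f^p$.

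Next we pass from fixed points of $\bar f^p$ to genuine points of minimal period $p$ off $C_n$. Fix a monomial $t^I$ with nonzero coefficient and $\gcd(p,i_1,\dots,i_m)=1$. There are two concerns.

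\emph{(a) The class consists only of boundary points of $\bar B$.} The boundary tori of the compactification come from blowing up the $S^1_i$. I will argue that only those tori whose circle $S^1_i$ is $f^p$-invariant can carry fixed points, that is, those with minimal period dividing $p$. I will further argue that each such torus lies in a single abelianized class and so contributes at most one monomial. This accounts for the subtraction of $n_p$. The main obstacle is here: I must show that fixed points of $\bar f^p$ on one boundary torus all share the same linking data $I$. I would try to prove this by lifting to the $\mathbb{Z}^m$-cover, using that the torus is connected and that a path in it projects to a loop around $S^1_i$, which changes the class only by the deck action fixing the torus.

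\emph{(b) A point in the class has minimal period $d$ with $d\mid p$ and $d<p$.} In that case $x\in\operatorname{Fix}(\bar f^d)$, and its $f^p$-class is the $(p/d)$-fold iterate of its $f^d$-class. In the abelianization this means the linking vector of $x$ for $f^p$ is $(p/d)$ times its vector for $f^d$, so every $i_j$ is divisible by $p/d>1$. Since $p/d$ also divides $p$, this contradicts $\gcd(p,I)=1$. I need to verify that the exponent vector is additive along iterates; this holds because linking with $C_n$ is computed from the loop traced by $f_t$, concatenated over the iterates.

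Hence each of the at least $M_{p,C_n}-n_p$ admissible monomials contains a point of $\operatorname{Per}_{p,C_n}(f)$. These points are distinct because they lie in distinct classes. Finally, the orbit $x, f(x),\dots,f^{p-1}(x)$ consists of $p$ distinct points of $\operatorname{Per}_{p,C_n}(f)$. The orbit may share classes among the counted monomials, since $f$ permutes the $f^p$-classes by shifting $I$ via the deck action. To be safe, I would count orbits rather than points: each orbit contributes $p$ points and meets at most $p$ classes, and in fact $f$ acts on the classes and preserves the index. So distinct monomials give at least $(M_{p,C_n}-n_p)/p$ orbits in the worst case. To obtain the stronger bound $p(M_{p,C_n}-n_p)$, I will check that $f$ maps the class with exponent $I$ to the class with the same exponent $I$, which holds because abelianized coordinates are conjugation invariant. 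Then the $p$ orbit points all lie in the same class, orbits from distinct monomials are disjoint, and we obtain $|\operatorname{Per}_{p,C_n}(f)|\ge p(M_{p,C_n}-n_p)$.
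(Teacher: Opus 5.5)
Your outline follows the paper's proof step for step. You apply Theorem~\ref{thmmain} to $f^p$ (with $b_{f^p,C_n}=b^p$), discard lower periods with the $\gcd$ condition, subtract $n_p$ for the boundary tori, and multiply by $p$ because $f$ preserves the exponent vector in $\Lambda_\mu$. Part (b) and the final orbit count are correct, and argued more carefully than in the paper. Two small repairs are needed. First, $R$ and $\bar R$ are only crossed homomorphisms by \eqref{product}; the identity $R(b^p)^{\pi_\mu}=(R(b)^{\pi_\mu})^p$ holds because $\pi_\mu\circ\nu(b)=\pi_\mu$. Second, Theorem~\ref{thmmain} for $f^p$ lives over $\Lambda_{\mu^p}$ and must be pushed forward to $\Lambda_\mu$, which is harmless.

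\emph{The genuine gap is part (a).} You need all fixed points of $\bar f^p$ on one boundary torus $\mathbb{T}_i$ to have the same exponent vector $I$. That claim is false, so the lifting argument cannot work. The meridian of $\mathbb{T}_i$ maps to $t_j\neq 1$, where $i\in\mu_j$. Hence each lift of $\mathbb{T}_i$ to the $\mathbb{Z}^m$-cover is a cylinder on which the deck transformation $t_j$ acts by translation, not trivially. Fixed points on the same cylinder can therefore satisfy $\tilde f(\tilde x)=t^{I}\tilde x$ for different $I$.

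Here is a concrete example with $p=1$ and $S^1_i$ invariant. In a tube $S^1\times D^2$ around $S^1_i$, let $f=k\circ h$ with the following ingredients.
\begin{itemize}
\item $h$ is the time-one map of a vector field tangent to $S^1_i$, with zeros exactly at $q,q'\in S^1_i$ and normal linearization $\operatorname{diag}(\log 2,-\log 2)$ along $S^1_i$.
\item $k(\theta,z)=(\theta,R_{2\pi\beta(|z|)\phi(\theta)}z)$ rotates the normal discs, with $\phi(q)=0$, $\phi(q')=1$, and $\beta$ a radial cutoff equal to $1$ near $0$.
\end{itemize}
Use the obvious isotopy (the flow $h_t$, with $t\phi$ in place of $\phi$). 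Along it, the boundary fixed point $(q,e_2)$ of $\bar f$ stays put, while $(q',e_2)$ runs once around a meridian of $\mathbb{T}_i$. The Reidemeister class of a fixed point is the class of its trajectory under the isotopy, so the two monomials differ by a factor $t_j$. When $q$ is attracting and $q'$ repelling on $S^1_i$, the boundary fixed points over $q$ and over $q'$ have total index $-2$ and $+2$, so neither set is negligible. With more zeros on $S^1_i$ and more integer values of $\phi$, a single torus can meet arbitrarily many classes.

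The paper covers this step with Remark~\ref{one class}, but that does not help. The remark gives Nielsen equivalence for $f^p$ on $B^3$, via a path running along $S^1_i$, which is not a path in $\bar B$. The monomials of Theorem~\ref{thmmain}, however, index Nielsen classes of $\bar f^p$ on $\bar B$. So neither argument justifies subtracting only $n_p$. To finish, you need one of the following:
\begin{itemize}
\item a genuinely new bound on the number of monomials realized only by points of $T$;
\item an extra hypothesis such as $\operatorname{Fix}(\bar f^p)\cap T=\emptyset$ (cf.\ Remark~\ref{extra}), under which no subtraction is needed at all.
\end{itemize}
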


The paper has the following structure. In Section \ref{Lefschetz}, we provide the necessary introduction to Nielsen fixed point theory as well as to the generalized Lefschetz number. In Section \ref{loop}, we present the loop braid groups and give all necessary tools that we will need.
In Section \ref{Main}, we associate a matrix representation to each loop braid, which is associated to its Burau representation, we also define the linking number of a fixed point of $f$ and $C_n$ and finally we prove our main result, Theorem \ref{thmmain}. We conclude this work with Section \ref{Application} where we explain the importance of our result via an example as well as with a derived application, which provides a lower bound for the number of periodic points of $f$.

\section{Generalized Lefschetz number}\label{Lefschetz}

In this section we give the necessary prerequisites about Nielsen fixed point classes and we define the generalized Lefschetz number.

Let $X$ be a connected finite cell complex, and $f:X\to X$ a continuous map. We denote by $\text{Fix}(f)$ the fixed point set of $f$, that is, $\text{Fix}(f)=\{x\in X\ | \ f(x)=x\}$. 
We recall that two fixed points $x, y$ of $f$ are said to be \emph{Nielsen equivalent} if
there exists a path $\ell$ in $X$ from $x$ to $y$ such that $\ell$ and its image $f \circ \ell$ are homotopic relative to their endpoints. The equivalence class of a point $x \in \mathrm{Fix}(f)$ is called the \emph{Nielsen class} of $x$. We denote by $\mathrm{NC}(f)$ the set of Nielsen classes of $f$. A Nielsen class is \emph{essential} if it has nonzero fixed point index, as defined in (\cite{jiang1983}, page 17) and the \emph{Nielsen number} $N(f)$ is defined as the number of essential Nielsen classes of $f$.

In general, determining whether two fixed points belong to the same Nielsen class is a difficult problem and Reidemeister showed that this geometric problem can be translated into an algebraic one. Choose a base point $x_0 \in X$ and a path $w$ from $x_0$ to $f(x_0)$, called \emph{base path}. For simplicity, let $\pi = \pi_1(X, x_0)$ be the fundamental group of $X$ based at $x_0$. We define $f_\pi: \pi \to \pi$ as the composition of the induced map $f_*: \pi_1(X, x_0) \to \pi_1(X, f(x_0))$ with $w_*: \pi_1(X, f(x_0)) \to \pi_1(X, x_0)$. Two elements $\lambda_1, \lambda_2 \in \pi$ are said to be \emph{Reidemeister equivalent} if there exists $\lambda \in \pi$ such that
\[
\lambda_2 = f_\pi(\lambda) \cdot \lambda_1 \cdot \lambda^{-1}.
\]
An equivalence class of elements of $\pi$ under this relation is called a \emph{Reidemeister class} and we denote by $ \mathcal{R}(f_\pi)$ the set of all Reidemeister classes. 

We are ready to assign a Reidemeister class to a fixed point of $f$. For $x \in \mathrm{Fix}(f)$, choose a path $\ell$ from $x_0$ to $x$. The Reidemeister class of $x$ is the class $[w(f \circ \ell)\ell^{-1}]\in \pi$ and will be denoted by $R(x)$. Note that the Reidemeister class represented by $[w(f \circ \ell)\ell^{-1}]$
is independent of the choice of $\ell$, since for another path $\ell'$ from $x_0$ to $x$, the loops $w(f \circ \ell)\ell^{-1}$ and $w(f \circ \ell')(\ell')^{-1}$  have the same Reidemeister class. All points in a Nielsen class $F$ share the same Reidemeister class, which is denoted by $R(F)$ and called the Reidemeister class of $F$ or \emph{coordinate of $F$}. Two fixed points belong to the same Nielsen class if and only if they have the same Reidemeister class. Thus, $\mathrm{NC}(f)$ can be viewed as a subset of $ \mathcal{R}(f_\pi)$.

For $\alpha \in \mathcal{R}(f_\pi)$, define $\mathrm{Fix}_\alpha(f) = \{x \in \mathrm{Fix}(f) \ | \ R(x) = \alpha\}$. Then, $$\text{Fix}(f)=\bigcup_{\alpha\in \mathcal{R}(f_\pi)} \mathrm{Fix}_\alpha(f).$$
If $\mathrm{Fix}_\alpha(f)$ is non-empty, it forms a Nielsen class with Reidemeister class $\alpha$. Note that the compactness of $X$ implies that $\mathrm{Fix}_\alpha(f)$ is empty except for finitely many $\alpha$. For an isolated set $S$ of fixed points of $f$, we denote by $\text{ind}(S)$ the fixed point index of $S$ with respect to $f$.
Moreover, we denote by $\mathbb{Z}\mathcal{R}(f_\pi)$ the free abelian group generated by the elements of $\mathcal{R}(f_\pi)$.

We can now define the generalized Lefschetz number or Reidemeister trace.

\begin{definition}
The \emph{generalized Lefschetz number} $\mathcal{L}(f)$, also called the \emph{Reidemeister trace}, of $f$ is defined by
\begin{equation}
\label{eq:ReidemeisterTrace}
\mathcal{L}(f) = \sum_{F \in \mathrm{NC}(f)} \mathrm{ind}(F) \cdot R(F)
       = \sum_{\alpha \in \mathcal{R}(f_\pi)} \mathrm{ind}(\mathrm{Fix}_\alpha(f)) \, \alpha \in \mathbb{Z}\mathcal{R}(f_\pi).
\end{equation}
\end{definition}

We remark that the generalized Lefschetz number is a homotopy invariant. Let $g:X\to X$ be a continuous map homotopic to $f$ through a homotopy $\{h_t\}_{0\leq t\leq 1}$. We consider as base path for $g$ the composition of $w$ with the path $h_t(x_0)$, for $0\leq t \leq 1$, so that we have $f_\pi=g_\pi$. From Nielsen fixed point theory we deduce that $\mathcal{L}(f)=\mathcal{L}(g)$.

From the generalized Lefschetz number we can deduce the Nielsen number as well as the  classical Lefschetz number. The Nielsen number $N(f)$ is the number of Reidemeister classes with non-zero coefficients in $\mathcal{L}(f)$ and the classical Lefschetz number is equal to the sum of the coefficients in $\mathcal{L}(f)$. 

We will now present a trace formula for the generalized Lefschetz number, using the universal covering space of $X$. Let $\widetilde{X}$ be the universal covering space of $X$. The cell structure on $X$ induces a corresponding cell structure on $\widetilde{X}$. We denote by $C_q(\widetilde{X})$ the group of cellular $q$-chains over $\mathbb{Z}$. 
The action of $\pi$ on $\tilde{X}$ induces an action of the group ring $\mathbb{Z}\pi$ on $C_q(\widetilde{X})$. Thus, the group $C_q(\widetilde{X})$ becomes free, finitely generated $\mathbb{Z}\pi$-module. Assume that $f: X \to X$ is a cellular map, and let $\widetilde{f}: \widetilde{X} \to \widetilde{X}$ be a lift of $f$. Then $\widetilde{f}$ induces the homomorphism
\[
\widetilde{f}_q: C_q(\widetilde{X}) \to C_q(\widetilde{X}).
\] 
The map $\widetilde{f}_q$ is an \emph{$f_\pi$-twisted homomorphism}, meaning that
\begin{equation}
\widetilde{f}_q(\lambda u) = f_\pi(\lambda) \, \widetilde{f}_q(u), \quad \lambda \in \pi, \, u \in C_q(\widetilde{X}).
\label{action}
\end{equation} 
We can define now the trace $\mathrm{tr}\, \widetilde{f}_q \in \mathbb{Z}\mathcal{R}(f_\pi)$ in the following way. Let $\tilde{e}_1, \dots, \tilde{e}_n$ be the $q$-cells of $\widetilde{X}$ forming a basis of $C_q(\widetilde{X})$ over $\mathbb{Z}\pi$ and $A = (a_{ij})$ the matrix over $\mathbb{Z}\pi$ representing the $f_\pi$-homomorphism $\widetilde{f}_q$ with respect to this basis. That is, the entries $a_{ij}$ are given by
\[
\widetilde{f}_q(\tilde{e}_i) = \sum_j a_{ij} \tilde{e}_j.
\]  
Then, we define the trace of $\tilde{f}_q$ as follows:
\[
\mathrm{tr}\, \widetilde{f}_q = [\mathrm{tr}\, A],
\]  
where $[\,\cdot\,]$ denotes the projection from $\mathbb{Z}\pi$ to $\mathbb{Z}\mathcal{R}(f_\pi)$.  

We note that by choosing another basis of $C_q(\tilde{X})$, the new basis will be related to the previous basis via a matrix $C$, such that the matrix representing $\tilde{f}_q$ with respect to the new basis will be given by $C^{f_\pi}AC$, where $C^{f_\pi}$ presents the matrix $C$ with entries replaced by their $f_\pi$-images. It follows that the trace $\mathrm{tr}\,\tilde{f}_q$ is well defined, since $[\mathrm{tr}\, (C^{f_\pi}AC^{-1})]=[\mathrm{tr}\, A]\in \mathbb{Z}\mathcal{R}(f_\pi)$.  

In \cite{husseini1982}, Husseini proved the following trace formula for the generalized Lefschetz number:
\begin{equation}
\label{trace}
\mathcal{L}(f) = \sum_q (-1)^q \, \mathrm{tr}\, \widetilde{f}_q \in \mathbb{Z}\mathcal{R}(f_\pi).
\end{equation}
We will use this trace formula to prove our main theorem in Section \ref{Main}.

\section{loop braid groups} \label{loop}

Let $n\in \mathbb{N}$. The loop braid groups $\LB_n$ appear in the literature under many different names; conjugating automorphisms of the free group $F_n$ due to Savushkina, \cite{savushkina1996group}, welded braid groups due to Fenn--Rim\'{a}nyi--Rourke, \cite{fenn1997braid} and groups of untwisted rings due to Brendle--Hatcher, \cite{brendle2013configuration}. The name of loop braid groups, which we will be using and denoting by $\LB_n$, was introduced by Baez--Crans--Wise, \cite{baez2007exotic}.

The groups $\LB_n$ are actually a $3-$dimensional analogue of the braid groups $B_n$, introduced by Artin \cite{artin1947braids}, and there are several interpretations; in terms of mapping class groups, fundamental group of specific configuration spaces and automorphisms of the free group on $n$ generators $F_n$. We refer the reader to \cite{damiani2017journey} for a complete presentation of the equivalent definitions of the loop braid groups.

In this work we will mainly treat loop braid elements in terms of
mapping classes of a $3$-ball with $n$ circles in its interior that are left setwise invariant. Let $M$ be a compact, connected, orientable manifold, possibly with boundary, and
$N$ an orientable submanifold contained in the interior of $M$. A self-homeomorphism of the pair of manifolds $(M,N)$ is a homeomorphism
$f \colon M \to M$ that fixes $\partial M$ pointwise and fixes $N$ globally. Every self-homeomorphism of $(M,N)$ induces a permutation on the
connected components of $N$ in the natural way. We denote by $\Homeo(M,N)$ the group of self-homeomorphisms of $(M,N)$ that preserve
orientation on both $M$ and $N$. The multiplication in $\Homeo(M,N)$ is given by the
usual composition.
Moreover, we denote by $\PHomeo(M,N)$ the subgroup of self-homeomorphisms of $(M,N)$
that send each connected component of $N$ to itself.

We say that $f, g \in \Homeo(M, N)$ are isotopic relative to $N$ if there exists
an isotopy $h_t \in \Homeo (M, N)$ from $f$ to $g$. 

\begin{definition}
The \emph{mapping class group} of $M$ with respect to $N$,
denoted by $\MCG(M,N)$, is the group of isotopy classes of self-homeomorphisms in
$\Homeo(M,N)$, with multiplication determined by composition. 
The \emph{pure mapping class group} of $M$ with respect to $N$, denoted by $\PMCG(M,N)$, is the subgroup of elements of
$\MCG(M,N)$ that send each connected component of $N$ to itself.
\end{definition}

We will give now the definition of the loop braid group in terms of a mapping class group.
Fix $n \geq 1$, and let $C_n = S^1_1 \sqcup \cdots \sqcup S^1_n$ be a collection of $n$ disjoint,
unknotted, oriented circles forming a trivial link of $n$ components in $\mathbb{R}^3$.
The exact position of $C_n$ is irrelevant and thus we assume that
$C_n$ is contained in the $xy$-plane inside the $3$--ball $B^3$.
\begin{definition}
The \emph{loop braid group} on $n$ components, denoted by $\LB_n$, is the mapping class
group $\MCG(B^3,C_n)$. The \emph{pure loop braid group} on $n$ components, denoted by
$\PLB_n$, is the pure mapping class group $\PMCG(B^3,C_n)$.

\end{definition}

 We want to emphasize that the loop braid elements is indeed a three dimensional analogue of the braid elements on the $2$-disk. Exactly as in the $2$-dimensional case, where $\operatorname{MCG}(\mathbb{D}^2, Q_n)\cong B_n$ with $Q_n$ an $n$-point set, in our setting we have the isomorphsim $\operatorname{MCG}(B^3, C_n)\cong \LB_n$. Thus, given an $[f]\in\operatorname{MCG}(B^3, C_n)$ and choosing an isotopy $\{f_t\}_{t \in [0,1]}$ from $\mathrm{id}$ to $f$, we obtain a loop braid element by following $C_n$ under the isotopy $\{f_t\}_{t \in [0,1]}$.

To give a geometric interpretation of the loop braid group $\LB_n$ we define the untwisted ring group, given by Brendle--Hatcher in \cite{brendle2013configuration}, which is actually isomorphic to $\LB_n$. We note that the following definition appears in \cite{goldsmith1981theory} under the name of motion groups.

\begin{definition}
Let $n\in \mathbb{N}$ and let $\mathcal{UR}_n$ be the space of all configurations of $n$ disjoint pairwise unlinked unordered Euclidean circles in $\mathbb{R}^3$ lying in planes parallel to a fixed one. The untwisted ring group $UR_n$ is its fundamental group.

\end{definition}
 
\begin{theorem}[Brendle--Hatcher, \cite{brendle2013configuration}]\label{bre}
Let $n\in \mathbb{N}$. The untwisted ring group $UR_n$ admits the following presentation:
$$\langle \sigma_1, \dots, \sigma_{n-1}, \rho_1, \dots, \rho_{n-1}\ |\ R  \rangle,$$
where $R$ is the set of the following relations:
 \begin{enumerate}[label=\textup{\Roman*.}]
 \item $\sigma_i\sigma_{i+1}\sigma_i=\sigma_{i+1}\sigma_i\sigma_{i+1}$, for $i=1, \dots, n-2$,
 \item $\sigma_i\sigma_j=\sigma_j\sigma_i$, for $|i-j|>1$,  where $1\leq i,j\leq n-1$,
 \item $\rho_i\rho_{i+1}\rho_i=\rho_{i+1}\rho_i\rho_{i+1}$, for $i=1, \dots, n-2$, 
 \item $\rho_i\rho_j=\rho_j\rho_i$, for $|i-j|>1$, where $1\leq i,j\leq n-1$, 
 \item $\rho_i^2=1$, for $i=1, \dots, n-1$,
 \item $\sigma_i\rho_j=\rho_j\sigma_i$, for $|i-j|>1$,  where $1\leq i,j\leq n-1$,
 \item $\sigma_i\rho_{i+1}\rho_i=\rho_{i+1}\rho_i\sigma_{i+1}$, for $i=1, \dots, n-2$,
 \item $\rho_i\sigma_{i+1}\sigma_i=\sigma_{i+1}\sigma_i\rho_{i+1}$, for $i=1, \dots, n-2$.
	\end{enumerate}
\end{theorem}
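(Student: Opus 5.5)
The statement is Brendle--Hatcher's presentation of $UR_n$, the fundamental group of the configuration space $\mathcal{UR}_n$ of unlinked horizontal Euclidean circles. My approach is to identify $UR_n$ with a group whose presentation is already known: the group of conjugating automorphisms of $F_n$, i.e.\ the loop braid group. I would then transfer that group's presentation (due to Fenn--Rim\'anyi--Rourke, or Savushkina) back to $UR_n$.

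First I would simplify the space. Each configuration in $\mathcal{UR}_n$ is determined by the centers and radii of its circles. I would deformation retract $\mathcal{UR}_n$ onto the subspace of configurations of circles in a common horizontal plane, none enclosing another, by shrinking radii and translating vertically. This uses that the circles are pairwise unlinked and lie in parallel planes, so nested horizontal circles can be lifted apart. That subspace fibres over a configuration space with manifest generators. The generator $\sigma_i$ is the motion in which circle $i$ shrinks, passes through circle $i+1$, and the two exchange positions. The generator $\rho_i$ is the motion in which circles $i$ and $i+1$ swap around each other without threading.

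Next I would construct a homomorphism $UR_n \to \Aut(F_n)$. A loop of configurations extends, via isotopy extension, to a motion of $\mathbb{R}^3$. Its action on $\pi_1(\mathbb{R}^3 \setminus C_n) \cong F_n$, the free group on meridians $x_1,\dots,x_n$, gives the map. Concretely:
\[
\sigma_i\colon x_i \mapsto x_i x_{i+1} x_i^{-1},\quad x_{i+1} \mapsto x_i,
\]
and $\rho_i$ swaps $x_i$ and $x_{i+1}$. This image is the group of conjugating automorphisms, for which Fenn--Rim\'anyi--Rourke give exactly relations I--VIII. Checking by hand that these relations hold in $UR_n$ is then routine: each relation is realized by an explicit homotopy of motions (braid relations, commutations of far-apart motions, and the mixed relations), or it follows from injectivity.

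The main obstacle is proving that this homomorphism is injective, i.e.\ that $UR_n$ acts faithfully. I would attack this with a fibration argument. Forgetting the last circle gives a fibration from the configuration space of ordered circles to that of $n-1$ circles. This yields a split short exact sequence of pure groups, whose kernel is the fundamental group of the fibre: the space of positions of one circle in the complement of the others. I would show this kernel is generated by the motions $\alpha_{ij}$ (circle $j$ passing through circle $i$) and maps isomorphically onto the corresponding subgroup of basis-conjugating automorphisms (McCool's group). Induction on $n$ then gives injectivity on the pure subgroup, and the symmetric-group quotient finishes the argument. The delicate step is computing the fibre's $\pi_1$; this requires its own deformation retraction argument, which I expect to be the hardest part.
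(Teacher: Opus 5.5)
The paper gives no proof of this theorem; it is quoted from Brendle--Hatcher. Judged on its own, your architecture (act on $F_n$, identify the image with the conjugating automorphisms, import the Fenn--Rim\'anyi--Rourke presentation) is reasonable, but two of its steps fail.

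The first failing step is the deformation retraction onto configurations lying in one horizontal plane with no circle enclosing another. That subspace is homotopy equivalent to the unordered configuration space of $n$ points in $\mathbb{R}^2$, so its fundamental group is $B_n$. For $n=2$, however, the theorem asserts $\langle\sigma_1,\rho_1\mid\rho_1^2\rangle\cong\mathbb{Z}*\mathbb{Z}/2$. The threading motion $\sigma_i$ cannot be performed inside that subspace. The planar exchange, which has infinite order there, becomes $\rho_i$, of order two, in $\mathcal{UR}_n$: once one circle is lifted off the common plane, it can slide freely over the other. Concentric circles at different heights show what goes wrong. Shrinking radii and translating vertically cannot separate them, and any horizontal separation needs a non-canonical choice. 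Generation by the $\sigma_i,\rho_i$ would in fact follow from faithfulness, so this step is dispensable, but as stated it is false.

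The real gap is faithfulness, which is the whole content once Fenn--Rim\'anyi--Rourke is quoted. Forgetting the last circle, $P\mathcal{UR}_n\to P\mathcal{UR}_{n-1}$, is not a fibration. The fibre over a configuration is a union of open convex slabs of horizontal circles lying strictly between consecutive heights of the fixed circles. These slabs are glued along the admissible circles in the planes of the fixed ones. Hence the fibre's $\pi_1$ is free of rank $\sum_P(c_P-1)$, where $c_P$ is the number of components of admissible circles in the plane $P$.

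For $n=3$, this gives different fibres over the same path component. If $C_1,C_2$ are coplanar and side by side, then $c=6$ and the rank is $5$. If they are at different heights, you get $c=3$ twice and the rank is $4$. So the fibres are not even homotopy equivalent.

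Worse, no fibre of this kind could serve as the kernel. In McCool's group $P\Sigma_3$ of basis-conjugating automorphisms, consider $\alpha_{13}$ and $\alpha_{23}$, which conjugate $x_1$, respectively $x_2$, by $x_3$. Both lie in $\ker(P\Sigma_3\to P\Sigma_2)$. They commute by McCool's relations and are independent in $H_1(P\Sigma_3)\cong\mathbb{Z}^6$. So that kernel contains $\mathbb{Z}^2$ and is not free. The Fadell--Neuwirth-style induction therefore cannot work.

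Faithfulness needs genuinely three-dimensional input: Dahm and Goldsmith on motion groups of the trivial link, together with Brendle--Hatcher's comparison of Euclidean rings with smooth unlinks. Alternatively, you can avoid $\Aut(F_n)$ by stratifying by heights instead of collapsing them. Configurations with pairwise distinct heights form a convex, hence contractible, open dense subset of $\mathcal{UR}_n$. The codimension-one walls, where two height-adjacent circles become coplanar, come in two kinds:
\begin{itemize}
\item side by side: a one-sided wall, contributing $\rho_i$ with $\rho_i^2=1$;
\item nested: a two-sided wall, contributing $\sigma_i$.
\end{itemize}
The codimension-two strata (two such coplanar pairs, or three coplanar circles) then supply the remaining relations.
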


\begin{remark}
In Theorem \ref{bre} we see that relations (I.)-(II.) are the standard braid group relations, relations (III.)-(V.) are permutation group relations and relations (VI.)-(VIII.) are the so-called mixed relations.
\end{remark}

The generators $\sigma_i, \rho_i$, were initially considered by Goldsmith in \cite{goldsmith1981theory}. The generator $\sigma_i$ permutes the $i^{th}$ and the $(i+1)^{th}$ circles by passing the $i^{th}$ circle through the $(i+1)^{th}$ and the generator $\rho_i$ permutes them passing the $i^{th}$ around the $(i+1)^{th}$, as depicted in the following figure.
\begin{figure}[H]
	\centering
	\includegraphics[width=1\textwidth]{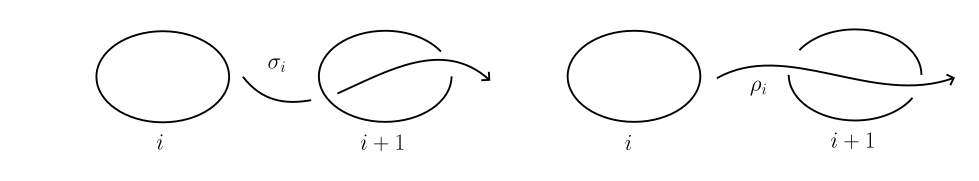}
	\caption{The generators $\sigma_i$ and $\rho_i$.}\label{figure1}
\end{figure}

\begin{remark}\label{remarknotation}
     From \cite{damiani2017journey}, we know that the groups $\LB_n$ and $UR_n$ are actually equivalent formulations of the same group. This result follows as a consequence of a result that was established for more general groups in \cite{goldsmith1981theory} by Goldsmith. Thus, Theorem \ref{bre} provides a presentation of $\LB_n$. Since $\LB_n\cong UR_n$, we will henceforth, for simplicity, continue to use the notation $\LB_n$. 
 \end{remark}

As already mentioned, we can consider the loop braid group also as automorphisms of the free group. The following definition will play an important role in associating a matrix to each generator of $\LB_n$, which we will use in proving our main result in Section \ref{Main}.

\begin{definition}\label{automorphism}
Let $n\in \mathbb{N}$ and let $F_n$ denote the free group of rank $n$ with generators $\{x_1, \dots, x_n\}$ and let $\Aut(F_n)$ denote its
automorphism group. The group $\LB_n$ is the subgroup of $\Aut(F_n)$ that consists of the so-called conjugating automorphisms, $$a:x_i\mapsto W_ix_{\pi(i)}W_i^{-1},$$ where $\pi$ is a permutation and $W_i$ is a word in $F_n$. Let $\sigma_i\in \Aut(F_n),\ i=1,\dots,n-1$ be given by

\begin{equation}
\sigma_i:
\begin{cases}
x_i\mapsto x_ix_{i+1}x_i^{-1}\\
x_{i+1}\mapsto x_{i}\\
x_j\mapsto x_j,\ j\neq i, i+1
\end{cases}
\end{equation}
and let $\rho_i\in \Aut(F_n),\ i=1,\dots,n-1$ be given by

\begin{equation}
\rho_i:
\begin{cases}
x_i\mapsto x_{i+1}\\
x_{i+1}\mapsto x_i\\
x_j\mapsto x_j,\ j\neq i, i+1
\end{cases}
\end{equation}
The loop braid group $\LB_n$ is the one generated by both sets of elements $\sigma_i$ and $\rho_i$, $i=1, \dots, n-1$.
\end{definition}

\begin{remark}
The elements $\sigma_i$, $i=1,\dots,n-1$ generate the braid subgroup of $\Aut(F_n)$, which is isomorphic to the braid group $B_n$. Moreover, the elements $\rho_i$, $i=1,\dots, n-1$ generate the permutation subgroup of $\Aut(F_n)$ which is a copy of the symmetric group $S_n$.
\end{remark}

Before concluding this section, we present the unreduced Burau representation of $\LB_n$. We recall that the classical braid groups $B_n$ can be defined as the fundamental groups of the configuration spaces of points in the plane and we know that $B_n$ is isomorphic to the group of isotopy classes of homeomorphisms of the $2$-disk that act by the identity on its boundary and that leave invariant a subset $Q_n$ of $n$ points in its interior.
One of the oldest interesting representations of $B_n$ 
is the Burau representation which was defined by assigning precise matrices to the standard generators of $B_n$, but as a homological representation it admits a more natural interpretation.
The unreduced Burau representation of $B_n$ has been extended to $\LB_n$ by Vershinin in \cite{Vershinin2001}. For more insights into the representation of loop braid groups, we refer the reader to \cite{representationLoop}, where Palmer and Soulié present a topological construction of the Burau representations of the loop braid groups.

The matrices for the unreduced Burau representation $L B_n \to GL_n(\mathbb{Z}[t^{\pm 1}])$
are given by
\[
\rho_i \mapsto I_{i-1} \oplus 
\begin{pmatrix}
0 & 1 \\
1 & 0
\end{pmatrix}
\oplus I_{n-i-1}
\]
and
\[
\sigma_i \mapsto I_{i-1} \oplus 
\begin{pmatrix}
1 - t & t \\
1 & 0
\end{pmatrix}
\oplus I_{n-i-1},
\]
where $I_k$ denotes the identity matrix of size $k$.
We note that the Burau representation is sometimes defined using the transposes of
these matrices which defines an equivalent representation.
In the following section we will see how we associate these two representations with the abelianized generalized Lefschetz number.
    
	\section{Main result}\label{Main}
  
In this section, we prove the main result of this work. Before doing so, we first establish the necessary setting. We are interested in homeomorphism of the 3-ball that leave invariant a trivial link $C_n$ of $n$ components in its interior. That is, we consider the elements $f\in \MCG(B^3,C_n)$. As explained in Section \ref{loop} any such element $f$ can be seen as an element generated by $\sigma_i$'s and $\rho_j$'s, by Remark \ref{remarknotation}. The aim of this section is to show how the Burau representation of an element in $\LB_n$, that corresponds to a given $f$, gives us information about the existence of fixed points of $f$ and in addition their linking behavior, which we will define in a while, with the trivial link $C_n$.

\subsection{Twisted representation of $\LB_n$}\label{matrixR}
We shall start by introducing the \emph{twisted representation} of $\LB_n$.

Given a ring $A$, let $GL_k(A)$
be the group of invertible $k \times k$ matrices over $A$.
 A homomorphism $\nu : A \to A'$ to another ring $A'$ induces a homomorphism $GL_k(A) \to GL_k(A')$, which for simplicity we also denote by $\nu$, which replaces each entry of the matrix with its image under $\nu$.
For $R\in GL_k(A)$, its image $\nu(R) \in GL_k(A')$ will be denoted by $R^\nu$.

 Let $\Lambda = \mathbb{Z}[a_1^{\pm1}, \dots, a_n^{\pm1}]$
be the ring of Laurent polynomials over $\mathbb{Z}$ in the variables $a_1, \dots, a_n$. We shall define a map 

$$R : \LB_n \to GL_n(\Lambda), \quad \sigma_i\mapsto R_{\sigma_i} \quad \text{and} \quad \rho_i\mapsto R_{\rho_i},$$ which as we will see later corresponds to the action of $\tilde{f}_1$ on $C_1(\tilde{X})$ as defined in Section \ref{action}.
For $i = 1, \dots, n-1$, let $R_{\sigma_i}$ be the matrix defined by
\[
R_{\sigma_i}=I_{i-1} \oplus 
\begin{pmatrix}
1-a_{i+1} & a_i \\
1 & 0
\end{pmatrix}
\oplus I_{n-i-1}
\] 

and let $R_{\rho_i}$ be the matrix defined by
\[
R_{\rho_i}=I_{i-1} \oplus 
\begin{pmatrix}
0 & 1 \\
1 & 0
\end{pmatrix}
\oplus I_{n-i-1},
\] 
where $I_j$ denotes the identity matrix of size $j$.

In addition,  we shall define a second map 

$$\bar{R} : \LB_n \to GL_n(\Lambda), \quad \sigma_i\mapsto \bar{R}_{\sigma_i} \quad \text{and} \quad \rho_i\mapsto \bar{R}_{\rho_i},$$ which, as we will see later, corresponds to the action of $\tilde{f}_2$ on $C_2(\tilde{X})$ as defined in Section in \ref{action}.
For $i = 1, \dots, n-1$, let $\bar{R}_{\sigma_i}$ be the matrix defined by
\[
\bar{R}_{\sigma_i}=I_{i-1} \oplus 
\begin{pmatrix}
0 & a_i \\
1 & 1-a_i
\end{pmatrix}
\oplus I_{n-i-1}
\] 

and let $\bar{R}_{\rho_i}$ be the matrices defined by
\[
\bar{R}_{\rho_i}=I_{i-1} \oplus 
\begin{pmatrix}
0 & 1 \\
1 & 0
\end{pmatrix}
\oplus I_{n-i-1},
\] 
where $I_j$ denotes the identity matrix of size $j$.

We call these two maps $R$ and $\bar{R}$ \emph{twisted representations} of $\LB_n$ because they are defined by the following rule, which coincides with the way $\tilde{f}_q$ is defined in relation \eqref{action}.
To an element $b \in \LB_n$ we can always associate an element of the symmetric group $S_n$, which actually corresponds to the induced permutation on the set of the trivial link $C_n$ of $n$ components. Thus, let $b \in \LB_n$ with permutation $\mu\in S_n$, and let
$\nu(b) : \Lambda \to \Lambda$
be the ring automorphism defined by
$\nu(b)(a_i) = a_{\mu(i)}.$ That is, the permutation $\mu$ acts on the indices of the variable $a_i$.
The maps $R : \LB_n \to GL_n(\Lambda)$ and $\bar{R} : \LB_n \to GL_n(\Lambda)$
are then defined by the rule
\begin{equation}
\label{product}
R(bb') = R(b)^{\nu(b')} \, R(b') 
\quad \text{and} \quad 
\bar{R}(bb') = \bar{R}(b)^{\nu(b')} \, \bar{R}(b').
\end{equation}

To sum up, we have associated to each generator $\sigma_i$ and $\rho_i$ two twisted representations $R, \bar{R}$, which, as will will see later, correspond to $\tilde{f}_1, \tilde{f}_2$ respectively, after having abelianized the entries of the matrices.

\subsection{Compactification}\label{compact}

As in the case of surfaces, it is often useful to use a blow-up procedure to replace invariant sets with boundary components, allowing us to study the dynamics on a compact space. This is achieved through a procedure similar to the one introduced by Bowen \cite{bowen}, which we adapt here to a 3-dimensional setting.

In our setting, the loop braid group $LB_n$ is initially defined as the topological mapping class group $\pi_0(\operatorname{Homeo}(B^3; C_n))$, where $C_n := S^1_1 \sqcup \dots \sqcup S^1_n$ is a trivial link of $n$ disjoint, unknotted, oriented circles in the interior of the 3-ball $B^3$. It follows from two results of Wattenberg [ \cite{wattenberg}, Lemma
1.4 and Lemma 2.4] that the topological mapping class group of the 3-space with respect
to a collection of n disjoint, unknotted, oriented circles, that form a trivial link, is
isomorphic to the $C^{\infty}$ mapping class group, defined in terms of diffeomorphisms.
Thus, we have the following is a group isomorphism:
\[
\pi_0(\operatorname{Homeo}(B^3; C_n)) \cong \pi_0(\operatorname{Diffeo}(B^3; C_n)).
\]
This isomorphism guarantees that every topological mapping class of the pair $(B^3, C_n)$ contains a smooth representative, and that topological isotopies can be smoothed. Consequently, without loss of generality, we can choose to represent any element of the loop braid group by a diffeomorphism $f$ that leaves the link $C_n$ invariant setwise. This smoothness is crucial as it justifies the use of differentials in the following geometric construction.

Equivalently, we consider the dynamics of the diffeomorphism $f$ restricted to the open manifold $B = B^3 \setminus C_n$. Because we are interested in compact spaces, we define its compactification $\bar{B}$ via a blow-up along $C_n$. For each circle $S^1_i$ of $C_n$, we consider its normal bundle $NS_i^1 = T B^3|_{S^1_i} / T S^1_i$. The blow-up replaces each $S^1_i$ with its unit normal bundle, which is homeomorphic to a torus $\mathbb{T}_i$. Topologically, this corresponds to deleting the open tubular neighborhoods of $C_n$ and retaining their boundary tori. We can therefore define the compactified space as $\bar{B} = (B^3 \setminus C_n) \cup T$, where $T := \bigsqcup_{i=1}^n \mathbb{T}_i$. Thus, $\bar{B}$ is a compact manifold with boundary, where a torus $\mathbb{T}_i$ appears as a boundary component for each deleted circle $S^1_i$.

Because $f$ is a diffeomorphism, and thus non-singular, that leaves $C_n$ invariant, its restriction to $B^3 \setminus C$ extends to a diffeomorphism $\bar{f} : \bar{B} \to \bar{B}$. In order to obtain the extension to $\bar{f}$ the fact that $f$ is non-singular, which ensures that $df_p$ is invertible and thus vectors do not collapse to zero, is important. Under this assumption the diffeomorphism $\bar{f} : \bar{B} \to \bar{B}$ is defined as follows: On the interior of $\bar{B}$ we define $\bar{f} = f$ and on the boundary $T$, the extension $\bar{f}$ is defined using the linear action of the differential $df_p$ on the normal space $N_p S^1_i$. Each point on the boundary $\mathbb{T}_i$ is uniquely determined by $(p,u)$ where $p \in S^1_i$ and $u\in N_p S^1_i$, with $\|u\|=1$. This point is mapped under $\bar{f}$ to $(f(p), \frac{w}{\|w\|})$, where $f(p)\in f(S^1_i)=S^1_j$ and $w=df_p(u)\in N_{f(p)} S^1_j$, since $f(C_n)=C_n$ which means that $f$ maps the circle $S^1_i\in C_n$ to a circle $S^1_j\in C_n$, for $1\leq i, j \leq n$.

\begin{remark}\label{extra}
    Note that it may happen that $\bar{f} $ has extra fixed points, which can only occur on the boundary $T$. These fixed points on $T$ can be determined from the differentials $df_p$ for $p \in S^1_i$. Specifically, a point on $T$ over $p \in S^1_i$ is fixed by $\bar{f}$ if and only if $p$ is a fixed point of $f$ and the induced action of $df_p$ on the normal space $N_p S^1_i$ has a strictly positive real eigenvalue. Therefore, if $C_n$ contains no fixed points of $f$, or if $df_p$ does not have positive real eigenvalues for any $p \in \operatorname{Fix}(f) \cap C_n$, then $\bar{f}$ has no fixed points on $T$. In this case, $\operatorname{Fix}(\bar{f}) = \operatorname{Fix}(f)$.
\end{remark}

\subsection{Linking number}\label{linking} 
We recall that given a loop braid $b \in \mathrm{LB}_n$, there exists a homeomorphism $f \in \mathrm{Homeo}(B^3, C_n)$ isotopic to the identity map $\mathrm{id}$, which corresponds to $b$. We fix an isotopy $f_t$ from $\mathrm{id}$ to $f$ such that the trace $f_t(C_n)$ corresponds to the given $b \in \mathrm{LB}_n$. To be precise, we denote this loop braid as $b_{f, C_n}$.

We now define the linking number of a fixed point $x_0 \in \mathrm{Fix}(f)$ with a subset $P \subseteq C_n$. Consider the space $X := (B^3 \times [0,1]) \setminus \bigcup_{t \in [0,1]} (f_t(C_n) \times \{t\})$, where $C_n$ is a trivial link of $n$ components in the interior of $B^3$.

Let $\gamma_{x_0}$ be the path in $X$ traced by $x_0$ under the isotopy, defined by $\gamma_{x_0}(t) = (f_t(x_0), t)$ for $0 \leq t \leq 1$. Because $x_0 \in \mathrm{Fix}(f)$ and $f_0 = \mathrm{id}$, the endpoints of this path are $(x_0, 0)$ and $(x_0, 1)$. Since $C_n$ is contained entirely in the interior of $B^3$, the boundary subspace $\partial B^3 \times [0,1]$ is disjoint from the trace of the loop braid. We close $\gamma_{x_0}$ into a loop $\hat{\gamma}_{x_0}$ by concatenating it with a return path $\eta$ from $(x_0, 1)$ to $(x_0, 0)$ chosen to lie entirely within the tube-free boundary space $(B^3 \setminus C_n) \times \{0,1\} \cup (\partial B^3 \times [0,1])$. 

Since $\hat{\gamma}_{x_0}$ is a well-defined closed loop in $X$, it defines a class in the homology group $H_1(X) \cong \mathbb{Z}^n$. Thus, we can uniquely write:
$$[\hat{\gamma}_{x_0}] = \sum_{i=1}^n l_i m_i,\ \text{where}\ l_i\in \mathbb{Z}\ \text{and}\
\langle m_1,\dots,m_n\rangle = \mathbb{Z}^n.
$$
Note that each circle $S^1_i$ corresponds to each meridian $m_i$. We define the linking number of $x_0$ with a subset $P \subseteq C_n$ as the integer:
$$\ell k(P, x_0) := \sum_{S^1_i \in P} l_i\in \mathbb{Z}$$

Given a homeomorpshism $f$ we can extend the notion of linking number for a fixed point $x_0$ of the blow-up $\bar{f}:\bar{B}\to \bar{B}$ and \(P \subseteq C_n\) in the following way. We shall extend the blow-up $\bar{f}:\bar{B}\to \bar{B}$
to a homeomorphism $\hat{f}: B^3\to B^3$ so that $\hat{f}$ coincides with $f$ on $C_n$ and 
that $\hat{f}$ has no fixed points in $\mathrm{Int}N(S^1_i)\setminus S^1_i$, where $N(S^1_i)$ is a tubular neighborhood of $S^1_i$, which is a solid torus, for each $S^1_i \in C_n$. We can take an isotopy $\hat{f}_t: B^3\to B^3$ from $\mathrm{id}$ to $\hat{f}$ so that, with 
respect to this isotopy, the braid $b_{\hat{f},C_n}$ is equal to $b_{f,C_n}$. Hence, the linking number of a fixed point $x_0$ of $\bar{f}$ and $P\subset C_n$ is defined as the linking number of $x_0$, seen as a fixed point of $\hat{f}$, and $P\subset C_n$.

\subsection{Abelianized generalized Lefschetz number}\label{abelian}

Let $f \in \mathrm{Homeo}(B^3, C_n)$. The fundamental group of $B^3\setminus C_n$ is the free group $F_n$ on $n$ generators, which we denote by $x_1, \dots, x_n$. This follows from the fact that he unlink complement $B^3\setminus C_n \subset B^3$ deformation retracts 
onto a wedge of $n$ circles and $n$ copies of the $2$-sphere. Let $f_*$ be the induced automorphism of $H_1(B^3\setminus C_n)$, induced by $f$, and let
$$\operatorname{Coker}(f_* - \mathrm{id})
=
H_1(B^3\setminus C_n) \big/ \operatorname{Im}(f_* - \mathrm{id}).$$

We recall that $b_{f,C_n}$ induces a permutation on the $n$ circles in $C_n$. Let $\mu$ be the permutation corresponding to the braid $b_{f,C_n}$,
and suppose $\mu$ is decomposed into $m$ cycles
$\mu_1, \dots, \mu_m$. For $i = 1, \dots, n$, let $a_i \in H_1(B^3\setminus C_n)$ be the image of
$x_i$ under the abelianization homomorphism
\[
\mathrm{Ab} : \pi_1(B^3\setminus C_n) \to H_1(B^3\setminus C_n),
\]
and let $t_j \in \operatorname{Coker}(f_* - \mathrm{id})$ be defined by
$t_j = [a_i], \quad \text{where } i \in \mu_j.$ It follows that the elements $a_1, \dots, a_n$ and $t_1, \dots, t_m$ form sets of generators
for the groups $H_1(B^3\setminus C_n)$ and $\operatorname{Coker}(f_* - \mathrm{id})$ respectively.
We make the following identifications:
\[
\Lambda = \mathbb{Z} H_1(B^3\setminus C_n),
\qquad
\Lambda_\mu = \mathbb{Z}\,\operatorname{Coker}(f_* - \mathrm{id}).
\]
As a result, the abelianization homomorphism
$\mathrm{Ab} : F_n \to H_1(B^3\setminus C_n)$
induces the following three homomorphisms
\begin{equation*}
\mathrm{Ab} : \mathbb{Z}F_n \to \Lambda,
\qquad
\overline{\mathrm{Ab}} : \mathcal{R}(\psi) \to \operatorname{Coker}(f_* - \mathrm{id}),
\qquad
\overline{\mathrm{Ab}} : \mathbb{Z}\mathcal{R}(\psi)  \to \Lambda_\mu.
\end{equation*}

Let $\bar{B}$ be the compactification $B^3\setminus C_n$, and let
$\bar{f} : \bar{B} \to \bar{B}$ be the blow-up of $f$ at $C_n$.
Then $\bar{B}$ is a finite cell complex, and its fundamental group is also identified with $F_n$. We are ready to present now an important result about the abelianized generalized Lefschetz number of $\bar{f}$. The following lemma also holds for the 2-dimensional case of homeomorphisms on the punctured 2-disk, see Lemma 4.7 in \cite{matsuoka}. Nevertheless, we present the proof for the reader's convenience. First, we need to fix some notation.
Suppose that the set $C_n$ is decomposed into $m$ periodic orbits $P_j$. For 
\[
I = (i_1, \dots, i_m) \in \mathbb{Z}^m,
\]
let $\operatorname{Fix}_I(f)$ be the set of fixed points $x$ of $f$ such that
$\big( \ell k(P_1, x), \dots, \ell k(P_m,x) \big) = I.$

\begin{lemma}\label{lemma}
\[
\overline{\mathrm{Ab}}\bigl(\mathcal{L}(\bar{f})\bigr)
=
\sum_{I\in \mathbb{Z}^m}
\operatorname{ind}\bigl(\operatorname{Fix}_I(\bar{f})\bigr)
\, t_1^{i_1} \cdots t_m^{i_m}
\in \Lambda_\mu.
\]

\end{lemma}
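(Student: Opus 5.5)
Start from the definition of the generalized Lefschetz number, $\mathcal{L}(\bar f)=\sum_{F\in \mathrm{NC}(\bar f)}\mathrm{ind}(F)\,R(F)$, and apply the additive map $\overline{\mathrm{Ab}}:\mathbb{Z}\mathcal{R}(\psi)\to\Lambda_\mu$ term by term. Since $\mathrm{Fix}(\bar f)$ is the disjoint union of its Nielsen classes, and the index is additive over disjoint isolated subsets, it suffices to prove one pointwise statement. For every $x\in\mathrm{Fix}(\bar f)$ we need
\[
\overline{\mathrm{Ab}}\bigl(R(x)\bigr)=t_1^{\ell k(P_1,x)}\cdots t_m^{\ell k(P_m,x)}\in \operatorname{Coker}(f_*-\mathrm{id}).
\]
Granted this, all fixed points in a Nielsen class $F$ lie in the same $\operatorname{Fix}_I(\bar f)$, since they share $R(F)$. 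Grouping classes by $I$ then turns $\sum_F \mathrm{ind}(F)\,\overline{\mathrm{Ab}}(R(F))$ into $\sum_I \mathrm{ind}(\operatorname{Fix}_I(\bar f))\,t^I$. One caveat: distinct $I$ could give the same monomial only if the $t_j$ were dependent. They are not, because $\operatorname{Coker}(f_*-\mathrm{id})$ is free abelian on $t_1,\dots,t_m$, the quotient of $\mathbb{Z}^n$ identifying the $a_i$ within each cycle of $\mu$.

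To prove the pointwise identity, use the isotopy to make the choices convenient. Take the base point $x_0$ near $\partial B^3$, where the isotopy is essentially trivial, and take as base path $w$ the reverse of the track $t\mapsto \bar f_t(x_0)$ (constant, if the isotopy fixes $\partial B^3$). Then $f_\pi=\psi$ is the automorphism determined by the loop braid. Now fix $x\in\mathrm{Fix}(\bar f)$ and a path $\ell$ from $x_0$ to $x$. The loop $w\,(\bar f\circ\ell)\,\ell^{-1}$ is homotopic in $B^3\setminus C_n\times\{1\}$ to something explicit. Consider the map $[0,1]\times[0,1]\to B^3\times[0,1]$ given by $(s,t)\mapsto(\bar f_t(\ell(s)),t)$, which avoids the moving link. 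Pushing it down to the slice $t=1$ shows the following: under the inclusion $B^3\setminus C_n\cong$ slice $\hookrightarrow X$, which is an isomorphism on $H_1$ sending meridian to meridian, the loop maps to a class homologous to $\ell^{-1}\cdot\gamma_x\cdot\ell$ closed through the boundary. This is exactly the loop $\hat\gamma_x$ of Subsection~\ref{linking}, up to conjugation. Therefore its abelianization in $H_1(B^3\setminus C_n)=\mathbb{Z}^n$ is $\sum_i l_i a_i$, with $l_i$ the meridian coefficients of $[\hat\gamma_x]$.

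Passing to the cokernel sends $a_i\mapsto t_j$ for $i\in\mu_j$. Hence the class becomes $\prod_j t_j^{\sum_{i\in\mu_j} l_i}=\prod_j t_j^{\ell k(P_j,x)}$, as required. Independence of all choices follows from the standard fact that changing $\ell$, or changing the representative in the Reidemeister class, changes the abelianized element by an element of $\operatorname{Im}(f_*-\mathrm{id})$.

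Fixed points of $\bar f$ lying on the boundary tori $T$ need separate handling. For these use the extension $\hat f$ from Subsection~\ref{linking}, which has no fixed points in the punctured tubular neighborhoods and induces the same braid. The linking number of such an $x$ is by definition computed via $\hat f$. Since $\bar B\hookrightarrow B^3\setminus C_n$ is a homotopy equivalence compatible with $\bar f$ and $\hat f$, the coordinate $R(x)$ agrees in both settings.

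I expect the main obstacle to be the homotopy argument of the second paragraph. It must identify the Reidemeister coordinate loop with $\hat\gamma_x$ in $H_1(X)$, carefully tracking the base path and the orientation conventions of the meridians. In addition, the sign and direction conventions of $\psi$ must be checked to match the twisted representation rule~\eqref{product}.
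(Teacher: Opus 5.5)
Your proposal is correct and follows the same route as the paper: establish the pointwise identity $\overline{\mathrm{Ab}}(R(x))=t_1^{\ell k(P_1,x)}\cdots t_m^{\ell k(P_m,x)}$ for each fixed point, then sum over Nielsen classes and regroup by $I$ using additivity of the index. The difference is in that first step: the paper simply asserts the identity as following from the definitions, whereas your isotopy-square argument proves it. You close $\gamma_x$ through $\ell$ at both levels and the vertical segment over $x_0\in\partial B^3$, and note that changing choices only alters the class by $\operatorname{Im}(f_*-\mathrm{id})$, which leaves the cycle sums unchanged. This also resolves an ambiguity the paper leaves open: its freely chosen return path $\eta$ would otherwise make $[\hat\gamma_x]$ ill-defined.
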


\begin{proof}
Let us denote by $F$ an $\bar{f}$ Nielsen class and let $x \in \operatorname{Fix}(\bar{f})$. Based on the definitions of the Reidemeister class of $F$, that is $R(F)$,
and of the linking number $\ell k(P_j, x)$, given in Section \ref{Lefschetz} and in Subsection \ref{linking}, respectively,
we have that
\begin{equation*}
\overline{\mathrm{Ab}}\bigl(R(F)\bigr)
=
\sum_{j=1}^m \ell k(P_j,x)\, t_j
\in \operatorname{Coker}(\bar{f} - \mathrm{id}).
\end{equation*}
To simplify the notation we set $\ell_j(F) = \ell k(P_j,x)$ for $x \in F$. We note that $\ell_j(F)$ is independent of the choice of $x$ in $F$. We recall that \begin{equation*}
\mathcal{L}(f) = \sum_{F \in \mathrm{NC}(f)} \mathrm{ind}(F) \cdot R(F)
       = \sum_{\alpha \in \mathcal{R}(f_\pi)} \mathrm{ind}(\mathrm{Fix}_\alpha(f)) \, \alpha \in \mathbb{Z}\mathcal{R}(f_\pi).
\end{equation*}
Thus, from the first equation and the definition of the generalized Lefschetz number we deduce that
\[
\overline{\mathrm{Ab}}\bigl(\mathcal{L}(\bar{f})\bigr)
=
\sum_{F \in \mathrm{NC}(\bar{f})}
\operatorname{ind}(F)\,\overline{\mathrm{Ab}}\bigl(R(F)\bigr)
=
\sum_{F \in \mathrm{NC}(\bar{f})}
\operatorname{ind}(F)\,
t_1^{\ell_1(F)} \cdots t_m^{\ell_m(F)}.
\]
We recall that $\operatorname{Fix}_I(f)$ denotes the set of fixed points $x$ of $f$ such that
$I=\big( \ell_1(F), \dots, \ell_m(F) \big)$.
Finally, putting together all Nielsen classes with the same linking numbers, that is, with the same index
$I = (i_1, \dots, i_m)$, we obtain
\[
\overline{\mathrm{Ab}}\bigl(\mathcal{L}(\bar{f})\bigr)
=
\sum_{I\in \mathbb{Z}^m}
\operatorname{ind}\bigl(\operatorname{Fix}_I(\bar{f})\bigr)
\, t_1^{i_1} \cdots t_m^{i_m}.
\]

\end{proof}

\begin{remark}
    From Lemma \ref{lemma} it is important to note that each monomial $t_1^{i_1} \cdots t_m^{i_m}$ in the expression of $\overline{\mathrm{Ab}}\bigl(\mathcal{L}(\bar{f})\bigr)$ corresponds to a fixed point $x$, which can be considered as the representative of its Nielsen class and that each exponent corresponds to the linking number of the fixed point $x$ and a subset of $C_n$. To be more precise, for a loop braid $b_{f,C_n}$ we consider the induced permutation that corresponds to the permutation of the $n$ circles in $C_n$. For example, let $b_{f,C_4}= \sigma_1\rho_3\in \LB_4$. The induced permutation $\mu$ on $C_4=\{S^1_1,S^1_2, S^1_3, S^1_4 \}$  corresponds to $(12)(34)\in S_4$ and decomposes into 2 disjoint cycles $\mu_1=(12)$ and $\mu_2=(34)$. Then, $\pi_\mu(a_1) = t_1, \pi_\mu(a_2) = t_1, \pi_\mu(a_3) = t_2$ and $\pi_\mu(a_4) = t_2$. Thus, the exponent $i_1$ corresponds to the linking number of $x$ and $P_1:=\{S^1_1, S^1_2\}$ and the exponent $i_2$ corresponds to the linking number of $x$ and $P_2:=\{S^1_3, S^1_4\}$.
\end{remark}

\subsection{Action on the universal covering space}\label{universal}
We recall that in Section \ref{Lefschetz} we saw that Husseini proved the following equality about the generalized Lefschetz number:
$$
\mathcal{L}(f) = \sum_q (-1)^q \, \mathrm{tr}\, \widetilde{f}_q \in \mathbb{Z}\mathcal{R}(f_\pi),
$$
where $\tilde{X}$ denotes the universal covering space of $X$ and that $\mathcal{L}(f)$ is a homotopy invariance. Let $X=B^3\setminus C_n$. We know that the complement of the $n$-component trivial link in $B^3$ deformation retracts onto  a wedge of $n$ circles and $n$ copies of
the 2-sphere. That is, $B^3\setminus C_n$ is homotopy equivalent to $\bigvee^n_i S^1_i \;\vee\; \bigvee^n_i S^2_i$. Thus, we can identify $X$ with the usual cell structure consisting of a single
$0$-cell $x_0$, $n$ $1$-cells $e_1, \dots, e_n$, and $n$ $2$-cells $b_1,\dots, b_n$. Therefore, $$
\mathcal{L}(f) = [1] - \mathrm{tr}\, \widetilde{f}_1 + \mathrm{tr}\, \widetilde{f}_2 \in \mathbb{Z}\mathcal{R}(f_\pi),
$$
where $\widetilde{f}_1: C_1(\widetilde{X}) \to C_1(\widetilde{X})$ and $\widetilde{f}_2: C_2(\widetilde{X}) \to C_2(\widetilde{X})$.

We now need to determine the induced action of $f$ on the universal covering space. That is, the maps $\tilde{f}_1$ and $\tilde{f}_2$. To determine the universal covering space of $X$, that is $\tilde{X}$, we start with the universal cover of $\bigvee^n_i S^1_i$, which is the Cayley graph of the free group $F_n$. Let $x_0$ be the point of identification 
in the wedge.
To each lift $\tilde{x_0} \in \widetilde{X}$ of $x_0$, we attach one copy 
of the universal cover of $\bigvee^n_i S^2_i$. Since, $\bigvee^n_i S^2_i$ is already universal cover to itself we obtain that $\tilde{X}$ is actually the Cayley graph of the free group $F_n$, where at each vertex we attach one copy of $\bigvee^n_i S^2_i$, see Figure \ref{FigureA}.

\begin{figure}[H]
	\centering
	\includegraphics[width=.8\textwidth]{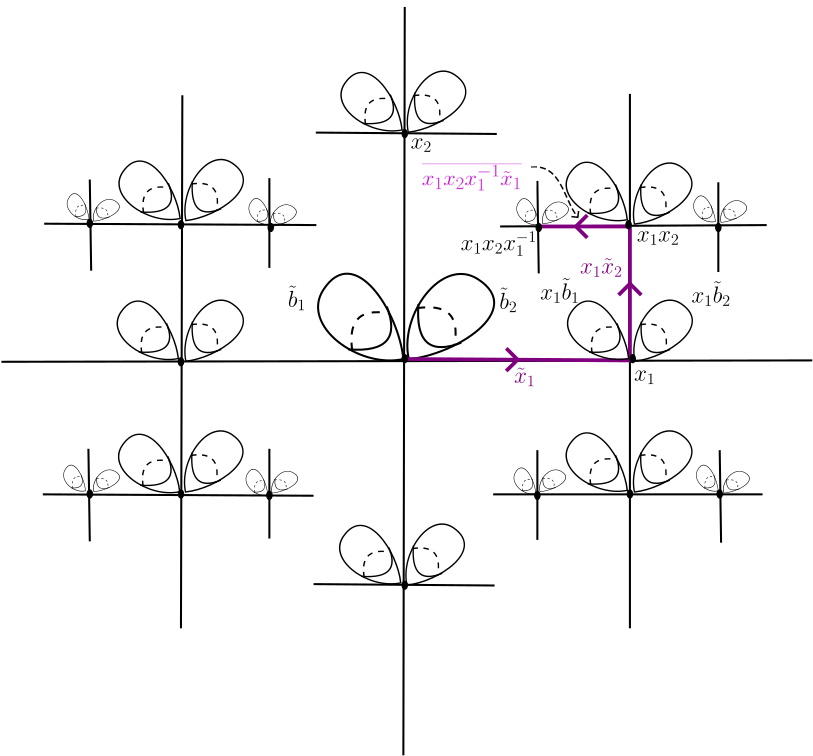}
	\caption{Universal covering space of $X$ for $n=2$.}\label{FigureA}
\end{figure}

Let $b\in \LB_n$. In Section \ref{loop}, we saw that $b$ can also be seen as an element in $\mathrm{Aut}(F_n)$. Note that $F_n = \langle x_1, x_2, \dots, x_n \rangle$ is isomorphic to $\pi_1(X)$ and we denote by $x_1, \dots, x_n$ the $n$ 1-cells, $e_1, \dots, e_n$, of $X$. Below we recall the action of the generators $\sigma_i$ and $\rho_i$ of $\LB_n$ on the 1-cells.

\begin{equation*}
\begin{minipage}{0.45\textwidth}
\[
\sigma_i:
\begin{cases}
x_i \mapsto x_i x_{i+1} x_i^{-1} \\
x_{i+1} \mapsto x_i \\
x_j \mapsto x_j, \quad j \neq i, i+1
\end{cases}
\]
\end{minipage}
\hfill
\begin{minipage}{0.45\textwidth}
\[
\rho_i:
\begin{cases}
x_i \mapsto x_{i+1} \\
x_{i+1} \mapsto x_i \\
x_j \mapsto x_j, \quad j \neq i, i+1.
\end{cases}
\]
\end{minipage}
\end{equation*}

We want to know how $\sigma_i$ and $\rho_i$ act on the 1-cells of the universal covering space. 
Let $\widetilde{x}_1, \dots, \widetilde{x}_n$ correspond to lifts of the $1$-cells $x_1, \dots, x_n$ with initial point $\widetilde{x}_0$. From the action of $\sigma_i$ on the $\{x_1,\dots,x_n\}$ it follows that the lift of the homomorphism $\sigma_i$ keeps $\tilde{x}_j$ fixed for $j\neq i, i+1$, transforms $\tilde{x}_{i+1}$ into $\tilde{x}_{i}$ and stretches $\tilde{x}_{i}$ into the path $\tilde{x_i}(x_i\tilde{x}_{i+1})(\overline{x_ix_{i+1}x_i^{-1}\tilde{x}_{i}})$, see Figure \ref{FigureA}. Note that we denote by $\overline{x_ix_{i+1}x_i^{-1}\tilde{x}_{i}}$ the reverse of the path $x_ix_{i+1}x_i^{-1}\tilde{x}_{i}$. Thus, the matrix over $\mathbb{Z}\pi$ representing the lift of $\sigma_i$ to $\tilde{\sigma_i}:C_1(\tilde{X})\to C_1(\tilde{X})$ with respect to the basis $\{\widetilde{x}_1, \dots, \widetilde{x}_n\}$ is the following:

\[
A_{1,\sigma_i}=I_{i-1} \oplus 
\begin{pmatrix}
1-x_ix_{i+1}x_i^{-1} & x_i \\
1 & 0
\end{pmatrix}
\oplus I_{n-i-1}.
\] 
From the action of $\rho_i$ on the $\{x_1,\dots,x_n\}$ it follows that the lift of the homomorphism $\sigma_i$ keeps $\tilde{x}_j$ fixed for $j\neq i, i+1$, transforms $\tilde{x}_{i}$ into $\tilde{x}_{i+1}$ and $\tilde{x}_{i}$ into $\tilde{x}_{i}$. Thus, the matrix over $\mathbb{Z}\pi$ representing the lift of $\rho_i$ to $\tilde{\rho_i}:C_1(\tilde{X})\to C_1(\tilde{X})$ with respect to the basis $\{\widetilde{x}_1, \dots, \widetilde{x}_n\}$ is the following:

\[
A_{1,\sigma_i}=I_{i-1} \oplus 
\begin{pmatrix}
0 & 1\\
1 & 0
\end{pmatrix}
\oplus I_{n-i-1}.
\] 

\begin{remark}
In the case of the punctured 2-disk in order to deduce the action of the homeomorphism on the $1$-cells of the universal covering space one can use the Fox derivatives, see \cite{huangJiang1989}. In our case, we could also have used the Fox derivatives by considering different basis of $C_1(\tilde{X})$. 
\end{remark}

\begin{figure}[H]
	\centering
	\includegraphics[width=.8\textwidth]{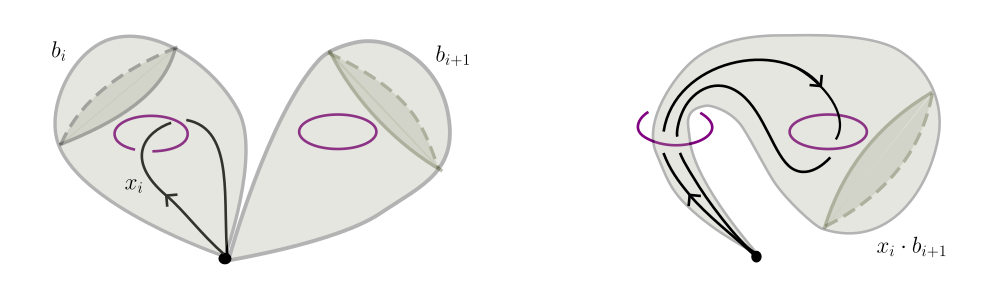}
	\caption{On the left hand side we see the $2$-cells $b_i$ and $b_{i+1}$ and on the right hand side we see the image of $b_i$ under the action of $\sigma_i$.}\label{FigureB}
\end{figure}

It remains to determine the action of $\sigma_i$ and $\rho_i$ on the 2-cells of the universal covering space. 
Let $\widetilde{b}_1, \dots, \widetilde{b}_n$ correspond to lifts of the $2$-cells $b_1, \dots, b_n$ with initial point $\widetilde{x}_0$. From the action of $\sigma_i$ on the $\{x_1,\dots,x_n\}$ it follows that $\sigma_i$ sends $b_i$ to $x_i\cdot b_{i+1}$, where $\cdot$ denotes the action of $\pi_1$ on $\pi_2$, see Figure \ref{FigureB}, it sends $b_{i+1}$ to $b_i+b_{i+1}-x_i\cdot b_{i+1}$, see Figure \ref{FigureC}, and keeps $b_j$ fixed for $j\neq i, i+1$. It follows that the lift of the homomorphism $\sigma_i$ keeps $\tilde{b}_j$ fixed for $j\neq i, i+1$, transforms $\tilde{b}_{i}$ into $x_i\tilde{b}_{i+1}$ and $\tilde{b}_{i+1}$ into $\tilde{b}_{i}+\tilde{b}_{i+1}-x_i\tilde{b}_{i+1}$. Thus, the matrix over $\mathbb{Z}\pi$ representing the lift of $\sigma_i$ to $\tilde{\sigma_i}:C_2(\tilde{X})\to C_2(\tilde{X})$ with respect to the basis $\{\widetilde{b}_1, \dots, \widetilde{b}_n\}$ is the following:

\[
A_{2,\sigma_i}=I_{i-1} \oplus 
\begin{pmatrix}
0 & x_i\\
1 & 1- x_i
\end{pmatrix}
\oplus I_{n-i-1}.
\] 
From the action of $\rho_i$ on the $\{x_1,\dots,x_n\}$ it follows that $\rho_i$ sends $b_i$ to $b_{i+1}$, $b_{i+1}$ to $b_i$ and keeps $b_j$ fixed for $j\neq i, i+1$. Thus, the lift of the homomorphism $\rho_i$ keeps $\tilde{b}_j$ fixed for $j\neq i, i+1$, transforms $\tilde{b}_{i}$ into $\tilde{b}_{i+1}$ and $\tilde{b}_{i+1}$ into $\tilde{b}_{i}$. Therefore, the matrix over $\mathbb{Z}\pi$ representing the lift of $\rho_i$ to $\tilde{\rho_i}:C_2(\tilde{X})\to C_2(\tilde{X})$ with respect to the basis $\{\widetilde{b}_1, \dots, \widetilde{b}_n\}$ is the following:

\[
A_{2,\rho_i}=I_{i-1} \oplus 
\begin{pmatrix}
0 & 1\\
1 & 0
\end{pmatrix}
\oplus I_{n-i-1}.
\] 

Finally, we define the map 
\begin{equation}\label{matrix1}
A_1:\LB_n\to GL_n( \mathbb{Z}[x_1^{\pm 1}, \dots, x_n^{\pm 1}])\ \text{by}
\end{equation}
$$A_1(\sigma_i)=A_{1,\sigma_i},\ A_1(\rho_i)=A_{1,\rho_i}\ \text{and}\ A_1(bb')=A_1(b)^{\nu(b')}A_1(b').$$
Note that the matrix $A_1$ over $\mathbb{Z}\pi$ represents the $f_\pi$-homomorphism $\widetilde{f}_1$ with respect to the basis of $C_1(\widetilde{X})$ over $\mathbb{Z}\pi$ consisting of the 1-cells $\tilde{x}_1, \dots, \tilde{x}_n$. Similarly, we define the map 
\begin{equation}\label{matrix2}
A_2:\LB_n\to GL_n( \mathbb{Z}[x_1^{\pm 1}, \dots, x_n^{\pm 1}])\ \text{by}
\end{equation}
$$A_2(\sigma_i)=A_{2,\sigma_i},\ A_2(\rho_i)=A_{2,\rho_i}\ \text{and}\ A_2(bb')=A_2(b)^{\nu(b')}A_2(b').$$
Note that the matrix $A_2$ over $\mathbb{Z}\pi$ represents the $f_\pi$-homomorphism $\widetilde{f}_2$ with respect to the basis of $C_2(\widetilde{X})$ over $\mathbb{Z}\pi$ consisting of the 2-cells $\tilde{b}_1, \dots, \tilde{b}_n$. 

\begin{figure}[H]
	\centering
	\includegraphics[width=.4\textwidth]{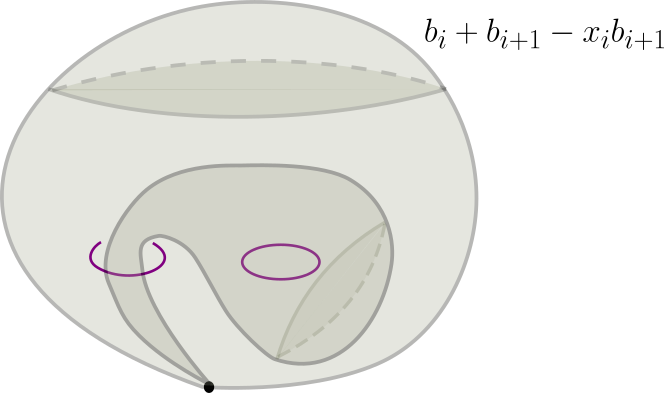}
	\caption{The image of $b_{i+1}$ under the action of $\sigma_i$.}\label{FigureC}
\end{figure}

\subsection{Main theorem}
We are now ready to present and prove our main theorem, but let us first introduce a homomorphism that we will need for the theorem.
Let $\mu$ be a permutation of $\{1, \dots, n\}$, decomposed as a product
of $m$ disjoint cycles $\mu_1, \dots, \mu_m$. We consider $m$ variables
$t_1, \dots, t_m$ and let
$\Lambda_\mu = \mathbb{Z}[t_1^{\pm 1}, \dots, t_m^{\pm 1}]$. For $\Lambda = \mathbb{Z}[a_1^{\pm1}, \dots, a_n^{\pm1}]$, consider the surjective homomorphism
\[
\pi_\mu : \Lambda \to \Lambda_\mu, \qquad \pi_\mu(a_i) = t_j,
\]
where $j$ is determined by the condition $i \in \mu_j$. For example, the permutation $\mu$ of $\{1,2,3,4\}$ that corresponds to $(12)$ and $(34)$ decomposes into 2 disjoint cycles $\mu_1=(12)$ and $\mu_2=(34)$. Then, $\pi_\mu(a_1) = t_1, \pi_\mu(a_2) = t_1, \pi_\mu(a_3) = t_2$ and $\pi_\mu(a_4) = t_2$. We recall that, for a matrix $S\in GL_n(\Lambda)$, we denote by $S^{\pi_\mu} \in GL_n(\Lambda_\mu)$ the matrix obtained from $S$ by replacing each entry with its image under $\pi_\mu$.

\begin{theorem}\label{thmmain}
Let $b = b_{f,C_n}$. Then
\[
1+\operatorname{tr}S(b)^{\pi_\mu} = \sum_{I \in \mathbb{Z}^m} \operatorname{ind}\big(\operatorname{Fix}_I(\bar{f})\big) \, t_1^{i_1} \cdots t_m^{i_m} \in \mathbb{Z}[t_1^{\pm 1}, \dots, t_m^{\pm 1}],
\]
where $S(b)=\bar{R}(b)-R(b)$.
\end{theorem}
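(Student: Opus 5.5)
We combine Husseini's trace formula \eqref{trace} with Lemma \ref{lemma}, following the strategy of Huang--Jiang for the punctured disk. First, $\bar{B}$ is homotopy equivalent to $X=B^3\setminus C_n$. This space is in turn homotopy equivalent to $\bigvee_i S^1_i\vee\bigvee_i S^2_i$, and $\pi_1=F_n$. The generalized Lefschetz number is a homotopy invariant, so we may compute $\mathcal{L}(\bar f)$ using the cellular model of Subsection \ref{universal}, realizing $\bar f$ (up to homotopy) by the cellular map induced by $b$. As recorded there, this gives
\[
\mathcal{L}(\bar f)=[1]-\operatorname{tr}\widetilde{f}_1+\operatorname{tr}\widetilde{f}_2\in\mathbb{Z}\mathcal{R}(f_\pi),
\]
with $\widetilde f_1,\widetilde f_2$ represented by the matrices $A_1(b)$ and $A_2(b)$ of \eqref{matrix1} and \eqref{matrix2}.

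The second step is to check that $A_1(b)$ and $A_2(b)$ really represent the lifts for an arbitrary word in the generators, not only for the generators themselves. For a composition $bb'$, the lift of the composite is the composite of the lifts. Since $\widetilde{(b)}_q$ is $b_\pi$-twisted, applying it to $\widetilde{(b')}_q(\tilde e_i)=\sum_j a_{ij}\tilde e_j$ replaces each coefficient $a_{ij}$ by its image under the induced automorphism. This yields the product rule $A_q(bb')=A_q(b)^{(\cdot)}A_q(b')$; I would verify that the order of factors and the twisting agree with \eqref{product}.

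Third, we abelianize. Under $\mathrm{Ab}:\mathbb{Z}F_n\to\Lambda$ with $x_i\mapsto a_i$, we have $x_ix_{i+1}x_i^{-1}\mapsto a_{i+1}$, so $A_{1,\sigma_i}\mapsto R_{\sigma_i}$, $A_{2,\sigma_i}\mapsto\bar R_{\sigma_i}$, and the $\rho_i$ matrices map to $R_{\rho_i}$ and $\bar R_{\rho_i}$. On $\Lambda$ the automorphism $b_\pi$ becomes $\nu(b)$, because conjugating automorphisms send $a_i$ to $a_{\mu(i)}$. Hence $\mathrm{Ab}(A_1(b))=R(b)$ and $\mathrm{Ab}(A_2(b))=\bar R(b)$, by induction on word length using \eqref{product}.

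Fourth, we pass to $\mathbb{Z}\mathcal{R}$. The projection $\mathbb{Z}F_n\to\mathbb{Z}\mathcal{R}(f_\pi)$ followed by $\overline{\mathrm{Ab}}$ equals $\mathrm{Ab}$ followed by the quotient $\Lambda\to\mathbb{Z}\operatorname{Coker}(f_*-\mathrm{id})$. Since $f_*(a_i)=a_{\mu(i)}$, the cokernel identifies all $a_i$ within a cycle $\mu_j$, so this quotient is exactly $\pi_\mu$, with $a_i\mapsto t_j$. Thus
\[
\overline{\mathrm{Ab}}(\mathcal{L}(\bar f))=1-\operatorname{tr}R(b)^{\pi_\mu}+\operatorname{tr}\bar R(b)^{\pi_\mu}=1+\operatorname{tr}S(b)^{\pi_\mu}.
\]
Lemma \ref{lemma} then identifies the left side with $\sum_I\operatorname{ind}(\operatorname{Fix}_I(\bar f))\,t^I$, which proves the theorem.

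The main obstacle is the consistency of the identifications. The base path and the lift must be chosen compatibly with the isotopy $f_t$, so that the Reidemeister coordinates of fixed points match the linking numbers of Subsection \ref{linking}; this compatibility is what makes the exponent convention ($t^{\ell k}$ rather than $t^{-\ell k}$) come out right. The twisted product rule must also match the order of composition. I would handle both by fixing the base point on $\partial B^3$, where the isotopy is the identity, so that the base path can be taken constant and $f_\pi$ is literally the automorphism $b$.
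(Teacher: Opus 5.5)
Your proposal is correct and follows the paper's proof essentially step for step. Both arguments apply Husseini's trace formula to the wedge model $\bigvee S^1\vee\bigvee S^2$, abelianize $A_1(b),A_2(b)$ to $R(b),\bar R(b)$, identify $\overline{\mathrm{Ab}}$ on Reidemeister classes with $\pi_\mu\circ\mathrm{Ab}$ (the cokernel of a permutation matrix), and then invoke Lemma~\ref{lemma}. Your extra checks are points the paper leaves implicit: the twisted product rule, where composing lifts as $\widetilde{b}\circ\widetilde{b'}$ actually gives $A_q(b')^{b_\pi}A_q(b)$, so \eqref{product} corresponds to reading $bb'$ as ``first $b$, then $b'$''; and placing the base point on $\partial B^3$ so the base path is constant.
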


\begin{proof}
 
Let $b=b_{f,C_n}\in \LB_n$. From Lemma \ref{lemma} we have 
$$\overline{\mathrm{Ab}}(\mathcal{L}(\bar{f}))=\sum_{I \in \mathbb{Z}^m} \operatorname{ind}(\operatorname{Fix}_I(\bar{f})) \, 
t_1^{i_1} \cdots t_m^{i_m}.$$ In addition, from \eqref{trace} it follows that 
$$\overline{\mathrm{Ab}}(\mathcal{L}(\bar{f}))= \overline{\mathrm{Ab}}\big([1] - [\mathrm{tr}\, \widetilde{f}_1] + [\mathrm{tr}\, \widetilde{f}_2 ]\big).$$
By the defined matrices in \eqref{matrix1}, \eqref{matrix2} and from the fact that, $$\text{for}\ \alpha \in \mathbb{Z}F_n,\ \text{and}\ [\alpha]\in \mathbb{Z}\mathcal{R}(\psi), \
\overline{\mathrm{Ab}}([\alpha]) = (\pi_\mu \circ \mathrm{Ab})(\alpha),$$ we obtain
the equality
$$\overline{\mathrm{Ab}}\big([1] - [\mathrm{tr}\, \widetilde{f}_1] + [\mathrm{tr}\, \widetilde{f}_2] \big)= (\pi_\mu \circ \mathrm{Ab})\big(1 - \mathrm{tr}\,A_1(b) + \mathrm{tr}\,A_2(b)\big).$$ After the abelianization of the entries of the matrices $A_1$ and $A_2$ the resulting matrices coincide with the matrices, defined in Subsection \ref{matrixR}, $R$ and $\bar{R}$ respectively. Thus,
$$\overline{\mathrm{Ab}}(\mathcal{L}(\bar{f}))= \pi_\mu\big(1 - \operatorname{tr} R(b)+\operatorname{tr} \bar{R}(b)\big)$$
Let $S(b):=\bar{R}(b)-R(b)$. Then,
$$\overline{\mathrm{Ab}}(\mathcal{L}(\bar{f}))=\pi_\mu\big(1 + \operatorname{tr}  S(b)\big).$$
We conclude that 
$$1+ \operatorname{tr} S(b)^{\pi_\mu}=\sum_{I \in \mathbb{Z}^m} \operatorname{ind}(\operatorname{Fix}_I(\bar{f})) \, 
t_1^{i_1} \cdots t_m^{i_m}.$$
\end{proof}

Let $\mathbb{Z}[t^{\pm1}]$ be the ring of Laurent polynomials over $\mathbb{Z}$ in the variable $t$. We define the homomorphism
$$
\pi : \Lambda \to \mathbb{Z}[t^{\pm1}]
\ \text{by}\ \pi(a_i) = t\ \text{for each}\ i.$$
The image of $R(b)$ under this homomorphism $\pi$ coincides with the
\emph{unreduced Burau matrix}, and the homomorphism
\[
\pi \circ R : \LB_n \to GL_n(\mathbb{Z}[t^{\pm1}])
\]
is called the \emph{unreduced Burau representation} of the loop braid groups $\LB_n$. Moreover, the matrix $R(b)^\pi$ coincides with the image of
$R(b)^{\pi_\mu}$ under the homomorphism from
$\Lambda_\mu$ to $\mathbb{Z}[t^{\pm1}]$ sending each $t_j$ to $t$.
Similarly, consider the image of $\bar{R}(b)$ under this homomorphism $\pi$. By taking its transpose and then reversing the ordering, that is, using instead the ordered basis $(\tilde{b}_n,\dots,\tilde{b}_1)$, we obtain again the
\emph{unreduced Burau matrix}. 

Some classical matrix representations of braid groups have been shown to play an important role in the study of periodic orbits of surface homeomorphisms, see \cite{matsuoka}. Our theorem extends this development to a three-dimensional setting. Specifically, it demonstrates the importance of representations of loop braid groups in the study of fixed points of elements in $\operatorname{Homeo}(B^3, C_n)$. This result is also noteworthy as it introduces a compelling three-dimensional context for exploring its dynamical properties.

\section{Example and application}\label{Application}

We will conclude our work with an example, where we present the importance of our main theorem in the study of fixed points, and with an application of our main result concerning the number of periodic points.

\subsection{Example}
Let $C_3=S^1_1\sqcup S^1_2\sqcup S^1_3$ and $f\in \operatorname{Homeo(B^3, C_3)}$ such that $b=b_{f,C_3}=\sigma_1\rho_2\sigma_1\in \LB_3$. The induced permutation of $C_3$ under $b$ corresponds to 
$\mu=(13)(2)\in S_3$. Let $\mu_1=(13)$ and $\mu_2=(2)$. It follows that $m = 2$, $\Lambda_\mu = \mathbb{Z}[t_1^{\pm 1}, t_2^{\pm 1}]$,
with $\pi_\mu(a_1) = t_1$, $\pi_\mu(a_2)=t_2$ and $\pi_\mu(a_3) = t_1$.

We will calculate $1+\operatorname{tr}S(b)^{\pi_\mu}$. We have 
\begin{align*}
  1+\operatorname{tr}S(b)^{\pi_\mu}
  &=1+\operatorname{tr}(\bar{R}(b)^{\pi_\mu}-R(b))^{\pi_\mu}\\
  &=1+\operatorname{tr}\bar{R}(b)^{\pi_\mu}-\operatorname{tr}R(b)^{\pi_\mu}\\
  &=1+\operatorname{tr}\begin{pmatrix}
0 & 0&a_2\\
0 & a_1 &1-a_2\\
1 & 1-a_1 &0
\end{pmatrix}^{\pi_\mu} - \operatorname{tr}\begin{pmatrix}
(1-a_3)(1-a_2) &(1-a_3)a_1 &a_2\\
1-a_2 & a_1 &0\\
1 & 0 &0
\end{pmatrix}^{\pi_\mu}\\
  &=1+\operatorname{tr}\begin{pmatrix}
0 & 0&t_2\\
0 & t_1 &1-t_2\\
1 & 1-t_1 &0
\end{pmatrix} - \operatorname{tr}\begin{pmatrix}
(1-t_1)(1-t_2) &(1-t_1)t_1 &t_2\\
1-t_2 & t_1 &0\\
1 & 0 &0
\end{pmatrix}\\
&= 1+ t_1-1+t_2+t_1-t_1t_2-t_1\\
&= t_1+t_2-t_1t_2
\end{align*}
From Theorem \ref{thmmain} we can conclude that 
$$\displaystyle\sum_{(i_1, i_2)\in\mathbb{Z}^2}\operatorname{ind}(\mathrm{Fix_{(i_1,i_2)}}(\bar{f}))t_1^{i_1}t_2^{i_2}=t_1+t_2-t_1t_2.$$
It follows that $\bar{f}$ has at least three fixed points. One fixed point that has
linking number $1$ with $\{S^1_1, S^1_3 \}\subset C_4$, another fixed point with linking number $1$ with $\{S^1_2\}\subset C_4$ and one fixed point with linking number $1$ with $\{S^1_3, S^1_4 \}$ and linking number $1$ with  $\{S^1_2\}$.

\subsection{Application}

    As an application of Theorem \ref{thmmain}, we  present the following theorem from which we obtain a lower bound for the number of periodic points of $f$. Once again we assume $\mu$ to be the permutation associated to $b_{f,C_n}\in \LB_n$, which is decomposed into $m$ cycles. We recall that the permutation $\mu$ corresponds to the permutation of the $n$ circles $S_i^1$ in $C_n$ and thus we can naturally associate a period to each circle $S_i^1\in C_n$. 
    
    \begin{remark}\label{one class}Note that the fixed points of $f$ on each circle $S^1_i$, for $1\leq i\leq n$, of $C_n$, if any, belong to the same Nielsen fixed point class since $f$ is an orientation-preserving homeomorphism. In the case of orientation preserving homeomorphisms $f$ we can always connect two fixed points on the circle $S^1_i$ with a path $\gamma$ such that $\gamma$ and $f\circ\gamma$ are homotopic relative to their endpoints.
\end{remark}

    Suppose that $f$ has a circle $S_i^1$ of period $p$. If $f$ has periodic points of period $p$ on $S_i^1$, that is, $f^p$ has fixed points on $S_i^1$, then all these fixed point on $S_i^1$ belong in the same Nielsen fixed point class, due to Remark \ref{one class}.
    
    Let $\operatorname{Per}_{ p, C_n}(f)$ be the set of periodic points of $f$ of period $p$ not on $S^1_i\in C_n$, for $1\leq i\leq n$.

    \begin{theorem}\label{thmapp}
Let $p\in \mathbb{N}^*$, the following holds:
\[
|\operatorname{Per}_{p, C_n}(f)| \ge p (M_{p, C_n}-n_p),
\]
where $M_{p, C_n}$ is the number of monomials $t_1^{i_1} \cdots t_m^{i_m}$, with non-zero coefficient, appearing in $1-\operatorname{tr}(R(b_{f,C_n})^{\pi_\mu})^p+\operatorname{tr}(\bar{R}(b_{f,C_n})^{\pi_\mu})^p$ with $\gcd(p, i_1, \dots, i_m) = 1$ and $n_p$ denotes the number of indices $1\leq i \leq n $, such that $S^1_i$ has period that divides $p$.
\end{theorem}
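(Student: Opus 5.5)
The plan is to apply Theorem \ref{thmmain} to the iterate $f^p$ rather than to $f$, and then pass from fixed points of $f^p$ to periodic points of $f$ of minimal period $p$ by discarding low-period contributions with the gcd condition. The key steps are: identify the polynomial in the statement as the abelianized Lefschetz number of $\bar f^p$; strip out classes meeting $C_n$; show the surviving points have minimal period $p$; and count orbits.

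The first step is the identification. The isotopy $f_t$ concatenated $p$ times gives an isotopy from $\mathrm{id}$ to $f^p$, whose loop braid is $b^p$ with $b=b_{f,C_n}$. By the twisted product rule \eqref{product}, $R(b^p)=R(b)^{\nu(b)^{p-1}}\cdots R(b)^{\nu(b)}R(b)$. After applying $\pi_\mu$, the permutation $\nu(b)$ acts trivially, because $a_i$ and $a_{\mu(i)}$ lie in the same cycle. Hence $R(b^p)^{\pi_\mu}=(R(b)^{\pi_\mu})^p$, and likewise for $\bar R$.

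There is a subtlety here. The cycle decomposition of $b^p$ may be finer than that of $\mu$, so Theorem \ref{thmmain} for $f^p$ naturally lives in a larger Laurent ring. Composing with the projection to $\Lambda_\mu$ merges linking numbers along the orbits of $\mu$. I would check that Lemma \ref{lemma} and the proof of Theorem \ref{thmmain} go through verbatim with $\Lambda_\mu$ in place of $\Lambda_{\mu^p}$, since the coarser projection still factors $\overline{\mathrm{Ab}}$. This gives
\[
1-\operatorname{tr}(R(b)^{\pi_\mu})^p+\operatorname{tr}(\bar R(b)^{\pi_\mu})^p=\sum_{I\in\mathbb{Z}^m}\operatorname{ind}\big(\operatorname{Fix}_I(\bar f^p)\big)\,t_1^{i_1}\cdots t_m^{i_m},
\]
where $I$ records linking numbers of fixed points of $\bar f^p$ with the $f$-orbits $P_j$. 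Each monomial with nonzero coefficient therefore certifies a nonempty set $\operatorname{Fix}_I(\bar f^p)$ of nonzero index.

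The second step removes classes coming from $C_n$. Fixed points of $\bar f^p$ lying on $T$ sit over fixed points of $f^p$ on circles $S^1_i$ whose period divides $p$. By Remark \ref{one class}, together with the observation that the torus $\mathbb{T}_i$ over $S^1_i$ is connected and fiberwise contractible in the right sense, all such points for a given circle should lie in a single Nielsen class, hence in a single $I$. So at most $n_p$ of the $M_{p,C_n}$ monomials are accounted for by points on $T$. The remaining at least $M_{p,C_n}-n_p$ monomials each produce a fixed point $x$ of $f^p$ in $B^3\setminus C_n$, and these are distinct points because their multi-indices differ.

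The third step shows such an $x$ has minimal period exactly $p$. Suppose $x$ had minimal period $q<p$ with $q\mid p$. The isotopy path for $f^p$ at $x$ is then the $(p/q)$-fold concatenation of the path for $f^q$, and the closing paths can be chosen compatibly. So every linking number $\ell k(P_j,x)$ for $f^p$ is $(p/q)$ times the corresponding one for $f^q$. Then $p/q>1$ divides $\gcd(p,i_1,\dots,i_m)$, which contradicts the gcd condition.

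The final step counts points. The $f$-orbit of $x$ has $p$ points, all fixed by $f^p$ with minimal period $p$ and off $C_n$. I expect linking numbers with the $f$-invariant sets $P_j$ to be constant along the orbit, since $f$ conjugates the data for $f^p$ at $x$ to that at $f(x)$. Then each orbit corresponds to one multi-index, and distinct indices give disjoint orbits, yielding $|\operatorname{Per}_{p,C_n}(f)|\ge p(M_{p,C_n}-n_p)$.

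The main obstacles are the two linking-number arguments: the multiplicativity in the third step and the orbit-invariance in the final step. Both require care with the choice of the closing path $\eta$ in Subsection \ref{linking}. For multiplicativity I would first try to compute the loop $\hat\gamma$ in $H_1$ of the complement of the $p$-fold stacked braid and compare it with $p/q$ copies of the $q$-fold loop.
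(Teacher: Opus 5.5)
This is essentially the paper's proof: you identify $1-\operatorname{tr}(R(b)^{\pi_\mu})^p+\operatorname{tr}(\bar R(b)^{\pi_\mu})^p$ with the abelianized generalized Lefschetz number of $\bar f^p$, use the gcd condition to rule out smaller minimal periods, discard at most $n_p$ monomials coming from the circles, and multiply by $p$ using constancy of the linking vector along orbits, while supplying several details the paper only asserts (the identity $R(b^p)^{\pi_\mu}=(R(b)^{\pi_\mu})^p$ from \eqref{product}, the passage from the cycles of $\mu^p$ to the coarser $\Lambda_\mu$, and multiplicativity of linking numbers under concatenation). The one justification of your own that is not right is the remark that $\mathbb{T}_i$ is fibrewise contractible over $S^1_i$: its fibres are meridian circles, which are nontrivial in $\pi_1(\bar B)$, so fixed points of $\bar f^p$ over a single circle can a priori differ by meridian windings, and your bound of $n_p$ on boundary monomials therefore rests, exactly as in the paper, only on Remarks \ref{extra} and \ref{one class}.
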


\begin{proof}

From Theorem \ref{thmmain}, the expression $1 + \operatorname{tr} S(b)^{\pi_\mu}$ yields a polynomial where each monomial represents a Nielsen fixed point class of $\bar{f}$, and its non-zero coefficient guarantees that the class is non-empty. Thus, for $p$-th iterate $\bar{f}^p$, the corresponding expression becomes:
\[
1 - \operatorname{tr}(R(b)^{\pi_\mu})^p + \operatorname{tr}(\bar{R}(b)^{\pi_\mu})^p.
\]
The polynomial we obtain from this expression describes the fixed points of $\bar{f}^p$, which are the periodic points of $\bar{f}$ whose periods divide $p$. Therefore, every monomial with a non-zero coefficient in this polynomial signifies the existence of at least one non-empty Nielsen fixed point class of $\bar{f}^p$.

We note that a fixed point of $\bar{f}^p$ might have a minimal period $k$ that is a proper divisor of $p$. These are the fixed points that we need to filter out. If a periodic point $x$ has a minimal period $k$, its trajectory loops around the link components $p/k$ times over $p$ iterations. Consequently, the exponents $i_1, \dots, i_m$ of its associated monomial must all be multiples of $\frac{p}{k}$. Thus, by imposing the condition:
\[
\gcd(p, i_1, \dots, i_m) = 1
\]
we ensure that $\frac{p}{k} = 1$, meaning $k = p$. This condition filters out all lower-period points and we ensure that the number $M_{p, C_n}$ counts monomials that correspond to points of minimal period $p$.

According to Remarks \ref{extra} and \ref{one class}, because $f$ is orientation-preserving, all fixed points of $f^p$ on any given circle $S^1_i$ fall into the same single Nielsen fixed point class. There are exactly $n_p$ circles whose periods divide $p$. Since each of these circles can contribute at most one Nielsen class to our total, the circles $S^1_i$ can account for at most $n_p$ of the monomials. Therefore, subtracting these we obtain at least
$M_{p, C_n} - n_p$
distinct, non-empty Nielsen fixed point classes of $f$ with minimal period $p$ not belonging to $C_n$.

Finally, we turn our count of distinct monomials into a count of individual points. The variables $t_1, \dots, t_m$ correspond to the $m$ disjoint cycles of the permutation $\mu$. Because each variable $t_j$ tracks the total linking number with an entire invariant cycle of components, the action of $f$ preserves these values. This means if a point $x$ has a given monomial, its entire periodic orbit under $f$, that is
$\{x, f(x), f^2(x), \dots, f^{p-1}(x)\}$,
shares the same monomial. Because each of the $M_{p, C_n} - n_p$ monomials is distinct, they must be generated by completely independent periodic orbits. Since every orbit of minimal period $p$ contains exactly $p$ distinct points, we multiply our orbit lower bound by $p$. 

As a result we obtain the following inequality:
\[
|\operatorname{Per}_{p, C_n}(f)| \ge p (M_{p, C_n} - n_p).
\]

\end{proof}

\section*{Acknowledgement}

The author has received funding for this project from the European Union’s Horizon 2024 research and innovation programme under the Marie Skłodowska-Curie grant agreement No. $101205038$.

\bibliographystyle{plain}
\bibliography{bibliLef}

\end{document}